\newtheorem{thm}{Theorem}[section]
\newtheorem{lemma}[thm]{Lemma}
\newtheorem{coro}[thm]{Corollary}
\newtheorem{proposition}[thm]{Proposition}
\theoremstyle{remark}           %\newtheorem*{remark}{Remark}
\newcommand{\diam}{\operatorname{diam}}
\newcommand{\dist}{\operatorname{dist}}
\newcommand{\loc}{\operatorname{loc}}
\newcommand{\rad}{\operatorname{rad}}
\newcommand{\R}{\mathbb R}
\newcommand{\Om}{\Omega}
\newcommand{\N}{\mathbb N}
\newcommand{\Z}{\mathbb Z}
\newcommand{\W}{\mathcal W}
\newcommand{\Ha}{\mathcal H}
\newcommand{\sub}{\subset}
\newcommand{\ol}{\overline}
\newcommand{\Char}[1]{\chi_{\lower 1.5pt\hbox{$\scriptscriptstyle #1$}}}
\newcommand{\dom}{{d_{\Omega}}}
\newcommand{\vint}[1]{{\mathchoice
          {\mathop{\vrule width 6pt height 3 pt depth -2.5pt
                  \kern -8pt \intop}\nolimits}%
          {\mathop{\vrule width 5pt height 3 pt depth -2.6pt
                  \kern -6pt \intop}\nolimits}%
          {\mathop{\vrule width 5pt height 3 pt depth -2.6pt
                  \kern -6pt \intop}\nolimits}%
          {\mathop{\vrule width 5pt height 3 pt depth -2.6pt
                  \kern -6pt \intop}\nolimits}}_{\!\!\!\! #1}}
\newcommand{\capp}[1]{\operatorname{cap}_{#1}}
\DeclareMathOperator{\dima}{dim_A}
\DeclareMathOperator{\dimh}{dim_H}
\DeclareMathOperator{\codima}{co-dim_A}
\DeclareMathOperator{\codimh}{co-dim_H}
\DeclareMathOperator*{\essinf}{ess\,inf}
\begin{document}

\title
{Quasiadditivity of variational capacity}
\author{Juha Lehrbäck and Nageswari Shanmugalingam} 
\date{\today}

\address{J.L.:~Department of Mathematics and Statistics, 
P.O. Box 35 (MaD), 
FIN-40014 University of Jyv\"askyl\"a, 
Finland}
\email{\tt juha.lehrback@jyu.fi}

\address{N.S.:~Department of Mathematical Sciences,
P.O. Box 210025,
University of Cincinnati,
Cincinnati, OH 45221-0025, U.S.A.}
\email{\tt shanmun@uc.edu}

\thanks{The first author acknowledges the support of
the Academy of Finland,  grant no.\ 252108. The second author was partially supported by
a grant from the Simons Foundation, Collaboration Grant \#~200474 and NSF grant DMS-1200915.}

\subjclass[2000]{Primary 31E05, 31C45; Secondary 46E35, 26D15}
%\keywords{}

\begin{abstract}
We study the quasiadditivity property
(a version of superadditivity with a multiplicative constant)
of variational capacity in metric spaces
with respect to Whitney type covers.
We characterize this property in terms of
a Mazya type capacity condition, and also
explore the close relation between quasiadditivity
and Hardy's inequality. 
\end{abstract}

\maketitle

\section{Introduction}

Given an open set $\Om\subset\R^n$ and a subset $E\subset\Om$, the relative $p$-capacity
$\capp{p}(E,\Om)$ measures the minimal energy needed by a Sobolev function that vanishes on $\partial\Om$
to take on a value at least $1$ on $E$. In potential theory this quantity can also be seen as a measure of 
the amount of rectifiable curves connecting $E$ to $\partial\Om$.  Hence, greater the amount of $\partial\Om$
that is close to $E$ the larger the relative $p$-capacity is. 

It can be seen that $E\mapsto\capp{p}(E,\Om)$ is an outer measure on subsets of $\Om$.
In particular, capacity is countably sub-additivite: if
$E_k\sub\Omega$, $k\in I\subset\N$, then 
\begin{equation}\label{eq:subadd}
    \capp{p}\Big(\bigcup_{k\in I}E_k,\Om\Big)\le \sum_{k\in I}\capp{p}(E_k,\Om).
\end{equation}
Unlike for Borel regular measures, the equality in \eqref{eq:subadd} does not (usually) hold even for nice, well-separated sets. Indeed, the only sets that are measurable with respect to $\capp{p}$-outer measure are
sets of zero capacity and their complements, see for example~\cite[Theorem~4.8]{Sio}
or~\cite[Theorem~2]{Don}. %{\bf There is also probably a result by Fuglede. We could try to find the reference.} 
Nevertheless, in some cases a converse to \eqref{eq:subadd}, with a multiplicative constant, can be shown to hold for certain of unions of sets; this is called the \emph{quasiadditivity property} of capacity. More precisely, we say that the $p$-capacity relative to an open set $\Om$ is \emph{quasiadditive} with respect to a given cover (or a decomposition) $\W$ of $\Om$ if
there is a constant $A>0$ such that
\[
  \sum_{Q\in\W}\capp{p}(E\cap Q,\Om)\le A\, \capp{p}(E,\Om)
\]
for all $E\subset\Om$.

The quasiadditivity property (for the linear case $p=2$)
was first considered by Landkof \cite[Lemma 5.5]{land} (without the name) and Adams \cite{Ad} for Riesz (and Bessel) capacities with respect to annular decompositions of $\R^n\setminus\{0\}$. Aikawa generalized these results in~\cite{A}, where he showed that if the complement $\R^n\setminus\Om$ has a sufficiently small dimension (formulated in terms of a local version of packing condition), then the Riesz capacity of $\R^n$ is quasiadditive with respect to Whitney decompositions of $\Om$. On the other hand, in \cite{A2} Aikawa considered the Green capacity (obtained via the Green energy) and demonstrated that if $\R^n\setminus\Om$ is uniformly regular (or, equivalently, uniformly $2$-fat),
then the Green capacity is quasiadditive with respect to Whitney decompositions of $\Omega$. Note that in this case, conversely to the result of \cite{A}, the complement $\R^n\setminus\Om$ has a large dimension. 
A good survey of these topics in the Euclidean setting can be found in~\cite[Section 7 of Part II]{AE}. See also
\cite[Section 16 of Part I]{AE} and \cite{AB} for related results for nonlinear ($p\ne 2$) setting for which
decompositions other than the Whitney decomposition are used.

The aim of this note is to study the quasiadditivity problem for the relative $p$-capacity with respect to Whitney type covers in the setting of complete metric measure spaces satisfying the `standard' structural assumptions (see Section \ref{sect:metric}). Nevertheless, most of our results are new for $p\ne 2$ 
(and for $p=2$, obtained via new methods) even in Euclidean spaces. Part of our motivation stems from a need to clarify the relation between quasiadditivity and Hardy's inequality (a Sobolev-type inequality weighted with a power of the distance-to-boundary function; see Section \ref{sect:hardy}). Glimpses of a connection between these concepts (and the related dimension bound of Aikawa from \cite{A}) have appeared e.g.\ in \cite{A2,AE,KZ,LMM,LT,W2}, but now our main result --- Theorem~\ref{thm:all}, a characterization of quasiadditivity in terms of a Mazya type capacity estimate ---
reveals a simple equivalence between quasiadditivity and Hardy's inequalities (Corollary~\ref{coro:q-add from hardy}).
These results also link quasiadditivity and the geometry of the boundary (or the complement) of the open set $\Omega$. 

The organization of this note is as follows. In Section \ref{sect:prelim} we recall some of the necessary background material: The basic assumptions, the notions of (co)dimension for metric spaces, Sobolev type spaces and the related capacities, and Whitney covers (substitutes of the classical Whitney decompositions for our more general spaces). Since our proofs are largely based on potential-theoretic (rather than PDE-based) tools, an overview of these is given at the end of  Section~\ref{sect:prelim}; of a particular importance for us is the weak Harnack inequality for superminimizers. 
Section \ref{sect:qadd} contains our main characterization of quasiadditivity and the aforementioned connection with Hardy's inequalities. A concrete outcome of these considerations is that the uniform $p$-fatness of $X\setminus\Omega$ guarantees the quasiadditivity for the relative $p$-capacity in $\Omega$ for $1<p<\infty$.

Aikawa's dimension bound $\dima(\R^n\setminus \Omega)<n-p$ from \cite{A} translates to more general metric spaces as $\codima(X\setminus \Omega)>p$. We show in Section \ref{sect:q and a} that this bound, together with an additional discrete John type condition, is sufficient for the relative $p$-capacity to be quasiadditive with respect to Whitney covers of $\Omega$. 
Finally, in Section \ref{sect:combo} we explain how the results involving a large complement (uniform $p$-fatness) or a small complement (Aikawa's condition) can be combined, allowing us to deal with more general open sets whose complements consist of parts of different sizes.

For the notation we remark that $C$ %and $c$ 
will denote positive constants whose value is
not necessarily the same at each occurrence. 
If there exist constants $c_1,c_2>0$ such that $c_1\,F\le
G\le c_2F$, we sometimes write $F\approx G$ and say that $F$ and $G$ are comparable.

\section{Preliminaries}\label{sect:prelim}

\subsection{Doubling metric spaces}\label{sect:metric}

We assume 
throughout this note that $X=(X,d,\mu)$ is a complete metric measure space, where
$\mu$ is a Borel measure supported on $X$, with $0<\mu(B)<\infty$ whenever $B=B(x,r)$ is an open ball in $X$, and 
that $\mu$ is \emph{doubling}, that is, there is a constant $C>0$ such that whenever $x\in X$ and $r>0$, we have
\[
  \mu(B(x,2r))\le C\, \mu(B(x,r)).
\]
We make the tacit assumption that each ball $B\sub X$ has a fixed center $x_B$ and radius $\rad(B)$, and thus notation such as $\lambda B = B(x_B,\lambda \rad(B))$ is well-defined for all $\lambda>0$. 

If $\mu$ is a doubling measure, then by iterating the doubling condition we find constants $Q>0$ and $C>0$ such that 
\begin{equation*}\label{eq: d dim*}
\frac{\mu(B(y,r))}{\mu(B(x,R))}\ge C\Bigl(\frac rR\Bigr)^Q
\end{equation*}
whenever $0<r\le R<\diam X$ and $y\in B(x,R)$.  Furthermore, if $X$ is connected
(this is guaranteed in our setting by the below-mentioned Poincar\'e inequalities), then there exists a constant $Q_u>0$ such that 
for all $0<r<R<\diam X$ and $y\in B(x,R)$,
\begin{equation}\label{eq: d dim conv}
   \frac{\mu(B(y,r))}{\mu(B(x,R))}\le C\Bigl(\frac rR\Bigr)^{Q_u}.
\end{equation}
In general, $1\le Q_u\le Q$.
However, if we have uniform upper and lower bounds for the measures of
the balls, i.e.\ 
\[
c^{-1} r^Q\leq \mu(B(x,r))\leq c r^Q
\]
for every $x\in X$ and all $0<r<\diam(X)$,
we say that the measure $\mu$ is \emph{(Ahlfors) $Q$-regular}.

When working with a (non-regular) doubling measure $\mu$, it is often convenient to describe the sizes of sets in terms of \emph{codimensions} (instead of dimensions). For instance, the \emph{Hausdorff codimension} of $E\sub X$ (with respect to $\mu$) is the number
\[\codimh(E) = \sup\big\{q\geq 0 : \Ha_\infty^{\mu,q}(E)=0\big\},\]
where 
\[
\Ha_\infty^{\mu,q}(E) = \inf\bigg\{ \sum_{k} \rad(B_k)^{-q}\mu(B_k) : E \subset \bigcup_{k} B_k\bigg\}
\] 
is the \emph{$q$-codimensional Hausdorff content}. If $\mu$ is $Q$-regular, then we have for all $E\sub X$ that $Q\,-\,\codimh(E)=\dimh(E)$, the usual Hausdorff dimension.

Another notion of codimension that will be useful for us is the \emph{Aikawa codimension}: For $E\sub X$ we define
$\codima(E)$ as the supremum of all $q\geq 0$ for which there exists a constant $C_{q}$ such that 
\begin{equation*}\label{AikDim*} 
\int_{B(x,r)}\text{dist}(y,E)^{-q}\,d\mu(y)\le C_{q}r^{-q}\mu(B(x,r))  
\end{equation*} for every $x\in E$ and all $0<r<\diam(E)$. 
Here we interpret the integral to be $+\infty$ if $q>0$ and $E$ has positive measure. It is not hard to see that 
$\codima(E)\leq\codimh(E)$ for all $E\sub X$ (cf.\ \cite{LT}).
If $\mu$ is Ahlfors $Q$-regular, then we could define the \emph{Aikawa dimension} of a set $E\subset X$ to be the number $\dima(E) = Q - \codima(E)$. Nevertheless, it was shown in~\cite{LT} that for subsets
of Ahlfors regular metric measure spaces this concept is actually equal to 
the \emph{Assouad dimension} of the subset; see \cite{luukas} for the basic properties of the Assouad dimension.

\subsection{Sobolev-type function spaces in the metric setting}\label{sect:sobo}

There are many analogs of Sobolev-type function spaces in the metric setting. The one considered in this note
is based on the notion of upper gradients, generalizing the fundamental theorem of calculus. Given
a measurable function $f:X\to[-\infty,\infty]$, we say that a Borel measurable non-negative function $g$ on $X$
is an \emph{upper gradient} of $f$ if whenever $\gamma$ is a compact rectifiable curve in $X$, we have
\[
  |f(y)-f(x)|\le \, \int_\gamma g\, ds.
\]
Here $x,y$ denote the two endpoints of $\gamma$, and the above condition should be interpreted as claiming that
$\int_\gamma g\, ds=\infty$ whenever at least one of $|f(x)|, |f(y)|$ is infinite. 
See~\cite{HEI} and~\cite{BB} for a good discussion on the notion of upper gradients. 
Using upper gradients as a substitute
for modulus of the weak derivative, we define the norm
\[
  \Vert f\Vert_{N^{1,p}(X)}:= \left(\int_X|f|^p\, d\mu\, +\, \inf_g\int_X g^p\, d\mu\right)^{1/p},
\]
where the infimum is taken over all upper gradients $g$ of $f$. The \emph{Newtonian space} $N^{1,p}(X)$ is
the space 
\[
  \{f:X\to[-\infty,\infty]\, :\, \Vert f\Vert_{N^{1,p}(X)}<\infty\}/\sim,
\] 
where the equivalence $\sim$ is given by $u\sim v$ if and only if $\Vert u-v\Vert_{N^{1,p}(X)}=0$
(see~\cite{Sh} or~\cite{BB} for more on this function space).

In addition to the doubling property,
we will also assume throughout that the space $X$ supports a $(1,p)$-Poincar\'e inequality, that is, there exist constants 
$C>0$ and $\lambda\ge 1$ such that whenever $B=B(x,r)\subset X$ and $g$ is an upper gradient of a measurable function $f$, we have
\[
    \vint{B} |f-f_B|\, d\mu \le C\, r\, \left(\,\vint{\lambda B}g^p\, d\mu\right)^{1/p}
\]
where 
\[
   f_B:=\frac{1}{\mu(B)}\, \int_B f\, d\mu =:\vint{B}\, f\, d\mu.
\]

Different notions of capacity are of fundamental importance in many questions related to the behavior of the functions belonging to a certain class. Given a set $E\subset X$, the \emph{total $p$-capacity} of $E$, denoted $\text{Cap}_p(E)$, is the infimum of $\Vert u\Vert_{N^{1,p}(X)}^p$ over all functions $u$ such that $u\ge 1$ on $E$. Just as sets of measure zero play the role of indeterminacy in the theory of Lebesgue spaces $L^p(X)$, sets of total $p$-capacity zero play the corresponding role in the theory of Sobolev type spaces; see~\cite{BB} or~\cite{Sh} for details. We say that a property holds ($p$-)\emph{quasieverywhere} ($p$-q.e.) if the exeptional set is of zero total capacity.

When the examinations are taking place in an open set $\Omega\sub X$, then a more appropriate version of capacity is \emph{the relative $p$-capacity}. For a measurable set $E\subset \Om$ this is defined as the number
\[
    \capp{p}(E,\Om):= \inf_u\inf_{g_u} \int_X g_u^p\, d\mu,
\]
where the infimum is taken over all $u\in N^{1,p}(X)$ with $u=0$ on $X\setminus\Om$, $u=1$ on $E$, 
$0\le u\le 1$, and over all upper gradients $g_u$ of $u$. A function $u$ satisfying the above conditions is called a \emph{capacity test function} for $E$. Should no such function $u$ exist, we set
$\capp{p}(E,\Om)=\infty$.

When the variational capacity is taken with respect to $\Omega=X$, it may be the case that $\capp{p}(E,X)=0$ for all bounded $E\sub X$; this is certainly true if $X$ is bounded. If $X$ is unbounded and still $\capp{p}(E,X)=0$ for all bounded $E\sub X$, then $X$ is called \emph{$p$-parabolic}, but if $\capp{p}(E,X)>0$ for some bounded $E\sub X$, then $X$ is \emph{$p$-hyperbolic}. These notions will be relevant in the considerations of Section \ref{sect:q and a}.
See~\cite{Holo} and~\cite{HS} for more on parabolic and hyperbolic spaces. Notice in particular that if $X$ is bounded or $p$-parabolic and $\Omega\sub X$ is such that $\text{Cap}_p(X\setminus\Omega)=0$, then $\capp{p}(E,\Omega)=0$ for every $E\sub\Omega$.

Besides measuring small (exeptional) sets, the relative capacity can also be used to give conditions for the largeness of sets. For instance, a closed set $E\sub X$ is said to be \emph{uniformly $p$-fat} if there exists a constant $C>0$ such that
\[
 \capp{p}(E\cap B, 2B)\geq  C \capp{p}(B, 2B)
\]
for all balls $B$ centered at $E$. Here $\capp{p}(B,2B)$ is actually comparable with $\rad(B)^{-p}\mu(B)$ for all balls $B$ of radius less than $\diam(X)/8$. See \cite[Chapter~6]{BB} for this and other basic properties of the total and the variational capacity on metric spaces. We remark that the uniform $p$-fatness can also be characterized using uniform density conditions for Hausdorff contents; see e.g.\ \cite{KLT}.

Recall that we say the variational $p$-capacity $\capp{p}(\cdot,\Omega)$ to be \emph{quasiadditive}
with respect to a decomposition or a cover $\W$ of $\Omega$ if there exists a constant $A>0$ such that
\begin{equation*}\label{eq:quasiadd*}
    \sum_{Q\in\W} \capp{p}(E\cap Q,\Omega) \leq A\capp{p}(E,\Omega)
 \end{equation*}
for every $E\subset\Omega$. In the next subsection we discuss the one particular 
family of covers we are concerned with.

\subsection{Whitney covers}\label{sect:W}

Let $\Omega\sub X$ be an open set. We often write $\dom(x)=\dist(x,X\setminus\Omega)$ for $x\in\Omega$. 
When $0<c\leq 1/2$, we fix a Whitney type cover $\W_c(\Omega)=\{B_i\}_{i\in \N}$, $\Omega\subset\bigcup_{i\in\N}B_i$, consisting of balls
$B_i=B(x_i,c\dom(x_i))$, $x_i\in\Omega$, such that the balls $B_i$ have uniformly controlled overlap: there exists $1\leq C<\infty$ such that $\sum_i \Char{B_i}\leq C$. Such a cover can always be constructed by considering maximal packings (or, alternatively, `$5r$'-covers) of the sets $\{x\in\Omega : 2^{-k-1}\leq \dom(x) < 2^{-k}\}$, $k\in\Z$, with the balls of the above type.
In pathological situations we allow $B_i=\emptyset$ for some $i$, if necessary.

In our proofs, we need to be able to dilate the Whitney balls without having too much
overlap; the existence of such a cover is established in the next 
(elementary) lemma. For a proof, see e.g.\ \cite{BBS} (Theorem~3.1 together with
Lemma~3.4) or~\cite[Chapter~3]{HKST}.

\begin{lemma}\label{lemma:no overlap}
Let $\Omega\sub X$ be an open set. Fix $L\geq 1$ and let
$\W_c(\Omega)=\{B_i\}_{i\in \N}$ be a Whitney cover of $\Omega$
with $c\leq (3L)^{-1}$. Then the balls $LB_i$ have
a uniformly bounded overlap. 
\end{lemma}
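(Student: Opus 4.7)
The plan is, for each $x \in \Omega$, to bound uniformly $\#I_x := \#\{i \in \N : x \in LB_i\}$, since $\sum_i \Char{LB_i}(x) = \#I_x$. The geometric idea is that the restriction $c \leq (3L)^{-1}$ is exactly what is needed to ensure that all balls indexed by $I_x$ have comparable radii and nearby centers, after which the doubling property of $\mu$ and the uniformly bounded overlap of the original cover $\W_c(\Omega)$ close the argument.

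First I would compare distances to the boundary. If $i \in I_x$, then $d(x,x_i) < Lc\,\dom(x_i) \leq \tfrac{1}{3}\dom(x_i)$, so the triangle inequality yields $\tfrac{2}{3}\dom(x_i) \leq \dom(x) \leq \tfrac{4}{3}\dom(x_i)$. Consequently, the radii $r_i = c\,\dom(x_i)$ are all comparable to $c\,\dom(x)$ (with constants depending only on $L$), each center $x_i$ lies in $B(x,\tfrac{1}{2}\dom(x))$, and each original ball $B_i$ with $i \in I_x$ is contained in $B(x,\dom(x))$.

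Next, doubling applied to the comparability $r_i \approx c\,\dom(x)$ and $d(x,x_i) \leq \tfrac{1}{2}\dom(x)$ produces a constant $C_1$, depending only on $c$, $L$, and the doubling constant of $\mu$, such that
$$
\mu(B_i) \geq C_1^{-1}\,\mu(B(x,\dom(x))) \quad \text{for every } i \in I_x.
$$
Combining this with the hypothesis $\sum_i \Char{B_i} \leq C_0$ gives
$$
\#I_x \cdot C_1^{-1}\,\mu(B(x,\dom(x))) \leq \sum_{i \in I_x} \mu(B_i) = \int \sum_{i \in I_x} \Char{B_i}\,d\mu \leq C_0\,\mu\Big(\bigcup_{i \in I_x} B_i\Big) \leq C_0\,\mu(B(x,\dom(x))),
$$
and therefore $\#I_x \leq C_0 C_1$, a bound independent of $x$. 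Since this holds for every $x \in \Omega$, the dilated collection $\{LB_i\}$ has uniformly bounded overlap. The proof is essentially a bookkeeping exercise with no substantive obstacle; the only item requiring attention is keeping track of how the constants depend on $c$, $L$, the doubling constant, and the overlap constant of $\W_c(\Omega)$.
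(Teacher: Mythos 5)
Your proof is correct. The paper itself does not prove this lemma but delegates to \cite{BBS} (Theorem~3.1 with Lemma~3.4) and \cite[Chapter~3]{HKST}; your argument is the standard direct one used in those sources: the restriction $Lc\le 1/3$ forces every $B_i$ with $x\in LB_i$ to have $\dom(x_i)$ comparable to $\dom(x)$ and center within $\tfrac12\dom(x)$ of $x$, so all such $B_i$ sit inside $B(x,\dom(x))$ with $\mu(B_i)\gtrsim\mu(B(x,\dom(x)))$ by doubling, and the bounded overlap of the original cover then caps $\#I_x$. All the constants you track depend only on $c$, $L$, the doubling constant, and the overlap constant of $\W_c(\Omega)$, as required.
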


In the proof of our main characterization of the quasiadditivity,
we will need for Whitney balls $B\in\W_c(\Omega)$ 
the estimate
\begin{equation}\label{eq:est for cap}
  c_1 \rad(B)^{-p} \mu(B) \leq \capp{p}(B,\Omega) \leq c_2 \rad(B)^{-p}\mu(B),
\end{equation}
where $c_1,c_2$ may depend on $\W_c(\Omega)$ but not on the particular $B\in\W_c(\Omega)$.
The upper bound in~\eqref{eq:est for cap} is always true in our setting, and can be proved 
almost immediately by using only the doubling condition and the test function
$u(x)=[1-d_\Om(x)/\rad(B)]_+$.
The lower bound is a bit more involved, and can in fact fail in
some spaces satisfying our basic assumptions. Thus, in the cases where we need
the lower bound, we will need to have some extra assumptions on $\Omega$ or $X$, 
e.g.\ those given in Lemma~\ref{lemma:cap low bound} below.
However, we stress that the lower bound in~\eqref{eq:est for cap} is only
needed in the proof of Lemma~\ref{lemma:main for balls}, which on the
other hand is only used to prove the Main Theorem~\ref{thm:all}, 
and then once again in the considerations of Section~\ref{sect:combo},
and thus these are the only instances where such assumptions are needed.

Another important case when the lower bound is valid is when
$X\setminus\Omega$ is uniformly $p$-fat; the bound then follows easily
(e.g.) from the $p$-Hardy inequality (see Section \ref{sect:hardy})
for capacity test functions of $B\in\W_c(\Omega)$.
In this case we only need the standing assumptions that $\mu$ is doubling 
and $X$ supports a $(1,p)$-Poincar\'e inequality. 

If we want to weaken the assumption on $X\setminus\Omega$, we need to
assume more on $\mu$ and~$p$. In the following lemma we chose the condition
that $X$ is unbounded (in which case we have $\mu(X)=\infty$ by~\eqref{eq: d dim conv}). 
However, if $X$ happens to be bounded, then we could impose a further 
condition on $\Om$ instead, such as $\diam(\Omega)<2\diam(X)$,
in which case the constants depend on $\gamma=\mu(\Om)/\mu(X)<1$. 
The proof in this case is similar to the one below. 
Recall here that $Q_u$ is the exponent from the `upper mass bound' \eqref{eq: d dim conv}.

\begin{lemma}\label{lemma:cap low bound}
Assume that $1\leq p < Q_u$ and $X$ is unbounded.
Then 
\begin{equation*}\label{eq:low est for cap}
  c\, \rad(B)^{-p}\mu(B) \leq \capp{p}(B,\Omega) 
\end{equation*}
for all Whitney balls $B\in\W_c(\Omega)$.
\end{lemma}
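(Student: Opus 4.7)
The plan is to establish the bound by showing that any admissible test function $u\in N^{1,p}(X)$ for $\capp{p}(B,\Om)$, together with any upper gradient $g_u$ of $u$, satisfies $\int_X g_u^p\,d\mu\ge c\,\rad(B)^{-p}\mu(B)$ with a constant $c$ independent of $u$. The argument will never actually use the constraint $u=0$ on $X\setminus\Om$, so it really produces the slightly stronger inequality $\capp{p}(B,X)\ge c\,\rad(B)^{-p}\mu(B)$, from which the lemma is immediate since restricting the class of test functions can only increase the infimum. Setting $r=\rad(B)$, fix an admissible $u$ and consider the concentric balls $B_k=2^k B$, $k\ge 0$. Since $X$ is unbounded we have $\mu(X)=\infty$ by~\eqref{eq: d dim conv}, and the same inequality gives $\mu(B_k)\to\infty$. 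Because $u\in L^p(X)$, Jensen's inequality yields $|u_{B_k}|^p\le\mu(B_k)^{-1}\int_X|u|^p\,d\mu\to 0$, while $u_{B_0}=u_B=1$ since $u\equiv 1$ on $B$.

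I would then telescope $1=\sum_{k=0}^{\infty}(u_{B_k}-u_{B_{k+1}})$ and estimate each term using doubling together with the $(1,p)$-Poincar\'e inequality,
\[
|u_{B_k}-u_{B_{k+1}}|\le C\vint{B_{k+1}}|u-u_{B_{k+1}}|\,d\mu\le C\,2^{k+1}r\,\Bigl(\,\vint{\lambda B_{k+1}}g_u^p\,d\mu\Bigr)^{1/p}.
\]
To replace the local average by a global quantity, I bound $\vint{\lambda B_{k+1}}g_u^p\,d\mu$ above by $\mu(\lambda B_{k+1})^{-1}\int_X g_u^p\,d\mu$ and apply~\eqref{eq: d dim conv} to the nested balls $\lambda B\subset\lambda B_{k+1}$ to obtain $\mu(\lambda B_{k+1})\ge c\,2^{(k+1)Q_u}\mu(B)$. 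Collecting and summing these bounds,
\[
1\le C\,r\,\mu(B)^{-1/p}\,\Bigl(\int_X g_u^p\,d\mu\Bigr)^{1/p}\sum_{k=0}^{\infty}2^{(k+1)(1-Q_u/p)}.
\]

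The only step where the hypothesis $p<Q_u$ enters, and therefore the main (essentially only) obstacle to keep an eye on, is the convergence of this geometric series: precisely because $1-Q_u/p<0$ the sum is finite, depending only on $p$ and $Q_u$. Rearranging yields $\int_X g_u^p\,d\mu\ge c\,r^{-p}\mu(B)$ with a constant depending only on the doubling constant, the Poincar\'e constants, $\lambda$, and on $Q_u-p$; taking the infimum over admissible $u$ and $g_u$ completes the proof.
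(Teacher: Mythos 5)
Your proof is correct and takes essentially the same approach as the paper: telescope across the dyadic dilates $2^kB$ via the $(1,p)$-Poincar\'e inequality, and use the reverse-doubling bound \eqref{eq: d dim conv} together with $p<Q_u$ to control the resulting geometric series. The only difference is cosmetic --- the paper stops the telescope at a finite $K$ with $u_{B_K}<1/2$ and extracts a single good scale $j_0$ by comparison with a geometric series, whereas you sum the infinite telescope directly after bounding each local average of $g_u^p$ by the global energy.
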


\begin{proof}
Let $u\in N^{1,p}(\Omega)$ be a capacity test function for $B=B(x,r)\in\W_c(\Omega)$,
and let $g\in L^p(\Omega)$ be an upper gradient of $u$. 
For positive integers $j$ let $B_j=2^j B$ and $r_j=2^j r$. 
As $u\in L^p(X)$ and $\mu(X)=\infty$ by the 
unboundedness of $X$ (here we use~\eqref{eq: d dim conv}),
there exists $K\in\N$ (depending on $u$) such that $u_{B_K}<1/2$. 
On the other hand, because $u\ge 1$ on $B$ we know that $u_B=1$.
Now a standard telescoping argument using the $(1,p)$-Poincar\'e inequality
yields
\begin{equation}\label{eq:chainagain}
\tfrac 1 2 \leq |u_B-u_{B_K}|\leq 
C \sum_{j=1}^K r_j \bigg(\,\,\vint{\lambda B_j} g^p \,d\mu\bigg)^{1/p}.
\end{equation}
It follows that there exists a constant $C_0>0$ and some $1\leq j_0 \leq K$ 
such that 
\[
 (r/r_{j_0})^{(Q_u-p)/p} = 
      (2^{-j_0})^{(Q_u-p)/p}\leq C_0 r_{j_0} \mu(\lambda B_{j_0})^{-1/p} 
        \bigg(\int_{\lambda B_{j_0}} g^p \,d\mu\bigg)^{1/p}
\]
(otherwise \eqref{eq:chainagain} would lead to a contradicition when
compared to a geometric series).
Thus we obtain,
using also \eqref{eq: d dim conv} and the assumption $1<p<Q_u$, that
\[
\int_{\Omega} g^p\,d\mu \geq C (r/r_{j_0})^{Q_u-p} r_{j_0}^{-p} \mu(\lambda B_{j_0})  
  \geq C r^{-p} \mu(B),
\]
as desired. 
\end{proof}

Let us record the following easy 
consequence of estimate \eqref{eq:est for cap}
for unions of Whitney balls. 

\begin{lemma}\label{lemma:main for balls}
Let $\Omega\sub X$ be an open set and let
$\W_c(\Omega)=\{B_i\}_{i\in \N}$ be a Whitney cover of $\Omega$
for which the lower bound in~\eqref{eq:est for cap} holds.
Furthermore, let
$U\sub \Omega$ be a union of Whitney balls, 
i.e., $U=\bigcup_{i\in I} B_i$ for some $I\sub \N$.
Then
\[\int_U \dom(x)^{-p}\,d\mu \approx
\sum_{i\in I} \capp{p}(B_i,\Omega).
\]
\end{lemma}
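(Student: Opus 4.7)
The plan is to verify the asserted two-sided estimate by reducing it, one Whitney ball at a time, to the capacity estimate \eqref{eq:est for cap}, and then to sum using the bounded overlap of the Whitney cover.

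First, I would compare the distance-to-boundary function with the radius on a single Whitney ball $B_i = B(x_i, c\dom(x_i))$. Since $c\le 1/2$, for any $x\in B_i$ the triangle inequality applied to $\dom$ gives
\[
(1-c)\dom(x_i)\;\le\;\dom(x)\;\le\;(1+c)\dom(x_i),
\]
so $\dom(x)\approx \dom(x_i)= c^{-1}\rad(B_i)$ on $B_i$ with constants depending only on $c$. Integrating, this produces
\[
\int_{B_i}\dom(x)^{-p}\,d\mu\;\approx\;\rad(B_i)^{-p}\mu(B_i),
\]
again with constants depending only on $c$ and $p$.

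Next, I would invoke both directions of \eqref{eq:est for cap}, which by hypothesis is available for the cover $\W_c(\Omega)$ in question, to conclude
\[
\int_{B_i}\dom(x)^{-p}\,d\mu\;\approx\;\capp{p}(B_i,\Omega)
\]
uniformly in $i$. Here the upper bound in \eqref{eq:est for cap} is essentially free, while the lower bound is where the standing assumption of the lemma is actually used.

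Finally, I would sum over $i\in I$ and exchange sum and integral using the bounded overlap of $\W_c(\Omega)$. Since the sets $B_i$ cover $U$, we have $\Char{U}\le \sum_{i\in I}\Char{B_i}\le C\,\Char{U}$, and so
\[
\int_U \dom(x)^{-p}\,d\mu\;\le\;\sum_{i\in I}\int_{B_i}\dom(x)^{-p}\,d\mu\;\le\;C\int_U \dom(x)^{-p}\,d\mu.
\]
Combining with the previous step yields the claimed equivalence. There is no serious obstacle here; the only thing requiring care is keeping track of which constants depend on $c$ (and hence on the cover) and noting explicitly that the lower bound step is exactly what forces the hypothesis that \eqref{eq:est for cap} holds for $\W_c(\Omega)$.
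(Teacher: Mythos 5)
Your proposal is correct and follows the same route as the paper's proof: compare $\dom(x)$ with $\rad(B_i)$ on each Whitney ball, translate that into $\int_{B_i}\dom^{-p}\,d\mu\approx\rad(B_i)^{-p}\mu(B_i)\approx\capp{p}(B_i,\Omega)$ using both bounds of \eqref{eq:est for cap}, and sum via the bounded overlap. The paper states these facts tersely without spelling out the triangle-inequality and summation details, but the argument is identical in substance.
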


\begin{proof}
 This follows from the fact that $\dom(x)\approx \rad(B_i)$ for all $x\in B_i$, with
 constants only depending on $c$,
 and that $\capp{p}(B_i,\Omega)\approx \rad(B_i)^{-p}\mu(B_i)$ by \eqref{eq:est for cap}.
\end{proof}

\subsection{Existence and properties of $p$-potentials}\label{sect:nonlin-pot-thry}

In computing the relative $p$-capacity $\capp{p}(E,\Omega)$, should this capacity be finite, 
we can find a minimzing sequence of capacity test functions $u_k\in N^{1,p}(X)$, i.e.,
\[
   \lim_{k\to\infty} \inf_{g_{u_k}}\int_X g_{u_k}^p\, d\mu=\capp{p}(E,\Omega).
\]
We will assume throughout that $1<p<\infty$; hence $L^p(X)$ %$N^{1,p}(X)$ 
is reflexive,
and so a standard variational argument using Mazur's lemma on $L^p(X)$ (as in 
Lemma~\ref{lem-existence} below)
tells us that if $\Omega\sub X$ is bounded and $\text{Cap}_p(X\setminus \Omega)>0$, then
there is a \emph{$p$-potential} $u\in N^{1,p}(X)$ such that $0\le u\le 1$ on $X$, $u=1$ on $E$, $u=0$ on $X\setminus\Omega$, and 
\[
   \inf_{g_u}\int_\Omega g_u^p\, d\mu=\inf_{g_u}\int_X g_u^p\, d\mu=\capp{p}(E,\Omega).
\]
Such a $p$-potential is \emph{unique} because $X$ supports a $(1,p)$-Poincar\'e inequality;
see~\cite{Sh2} for more details. 
Nevertheless, the following more general lemma tells us that
a $p$-potential $u\in N^{1,p}_{\loc}(X)$ exists in more general cases (e.g.\ if $\text{Cap}_p(X\setminus\Om)=0$) as well; though, if $X$ is 
$p$-parabolic, we would have $u$ be a constant. In addition,
the below proof shows that the reflexivity of $N^{1,p}(X)$ is actually not needed for the
existence of $p$-potentials.

\begin{lemma} \label{lem-existence}
Let $1<p<\infty$ %Suppose that $\text{Cap}_p(X\setminus\Omega)=0$.
and let $E\subset\Omega$ be such that $\capp{p}(E,\Omega)<\infty$. Then there is a function $u\in N^{1,p}_{\loc}(X)$
such that $u=1$ $p$-q.e.~on $E$, $u=0$ $p$-q.e.~in $X\setminus\Omega$, $0\le u\le 1$ on $X$, and 
\[
   \inf_{g_u}\int_{\Omega} g_u^p\, d\mu=\capp{p}(E,\Omega).
\]
\end{lemma}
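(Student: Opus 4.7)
The plan is to apply the direct method of the calculus of variations. Choose a minimizing sequence of capacity test functions $u_k\in N^{1,p}(X)$ with upper gradients $g_k$, so that $0\le u_k\le 1$, $u_k=1$ on $E$, $u_k=0$ on $X\setminus\Omega$, and $\int_X g_k^p\,d\mu\to\capp{p}(E,\Omega)$. Since $\{g_k\}$ is bounded in $L^p(X)$ and $1<p<\infty$, reflexivity of $L^p(X)$ yields a subsequence with $g_k\rightharpoonup g$ weakly in $L^p(X)$, and $\int_X g^p\,d\mu\le\capp{p}(E,\Omega)$ by weak lower semicontinuity of the $L^p$-norm.

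By Mazur's lemma there are convex combinations $\hat g_k=\sum_{j\ge k}\alpha_{k,j}g_j$ (finite sums with $\alpha_{k,j}\ge 0$ and $\sum_j \alpha_{k,j}=1$) converging strongly to $g$ in $L^p(X)$. The corresponding $\hat u_k=\sum_{j\ge k}\alpha_{k,j}u_j$ retain the pointwise constraints $0\le\hat u_k\le 1$, $\hat u_k=1$ on $E$, $\hat u_k=0$ on $X\setminus\Omega$, and have $\hat g_k$ as upper gradients. Because the $\hat u_k$ are uniformly bounded, they are bounded in $L^p(B)$ on each ball $B\sub X$; a diagonal weak-compactness argument in $L^p_{\loc}(X)$ followed by a second application of Mazur's lemma (arranged to preserve the strong $L^p$-convergence of the gradients) produces further convex combinations $\tilde u_k\to u$ strongly in $L^p_{\loc}(X)$, and, after extracting a subsequence, also $\mu$-a.e.\ on $X$. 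Clearly $0\le u\le 1$, with $u=1$ on $E$ and $u=0$ on $X\setminus\Omega$ up to a $\mu$-null set.

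To finish I would invoke Fuglede's lemma: along a further subsequence, $\int_\gamma \tilde g_k\,ds\to\int_\gamma g\,ds$ for $p$-a.e.\ compact rectifiable curve $\gamma$. Combined with the $\mu$-a.e.\ convergence of $\tilde u_k$ to $u$ and the upper gradient inequality for each $\tilde u_k$, this shows that $g$ is a $p$-weak upper gradient of $u$. Standard Newtonian space theory then allows us to redefine $u$ on a set of $p$-capacity zero so that $g$ becomes a genuine upper gradient, $u\in N^{1,p}_{\loc}(X)$, and the conditions $u=1$ on $E$, $u=0$ on $X\setminus\Omega$ hold $p$-q.e. Since $u$ is then itself admissible for the capacity infimum, the bound $\int_\Omega g^p\,d\mu\le\capp{p}(E,\Omega)$ forces equality.

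The principal obstacle is this last step: one must upgrade a $\mu$-a.e.\ limit with a $p$-weak upper gradient into a representative whose pointwise constraints hold $p$-quasieverywhere and for which $g$ controls $u$ along \emph{every} admissible curve, not merely $p$-almost every one. Since $p$-capacity is not absolutely continuous with respect to $\mu$, this requires the precise representative machinery of Newtonian spaces, which is available exactly because $1<p<\infty$.
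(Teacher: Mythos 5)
Your proof is correct and follows essentially the same route as the paper: the direct method applied to a minimizing sequence, weak compactness of $\{u_k\}$ and $\{g_{u_k}\}$ in $L^p_{\loc}$, Mazur's lemma to upgrade to strong convergence (the paper explicitly flags Mazur in the paragraph preceding the lemma), and then the passage from a $\mu$-a.e.\ limit to a representative with $g$ as a $p$-weak upper gradient and the constraints holding $p$-q.e. The only difference is packaging: where you invoke Fuglede's lemma plus the Newtonian-space representative machinery, the paper outsources these same steps to a citation of Kallunki--Shanmugalingam's Lemma~3.1 together with the proof of Theorem~3.7 in Shanmugalingam's Newtonian-spaces paper.
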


\begin{proof}
If $\Omega$ is bounded, then the following proof can be easily modified, or the
results of~\cite{Sh2} can be used to obtain the desired conclusion. Hence here we will only
give the proof for the case that $\Omega$, and hence $X$, is unbounded.

For each $u\in N^{1,p}(X)$ there is a minimal ($p$-weak) upper gradient $g_u\in L^p(X)$; see 
for example~\cite{BB}. Hence, from now on, we let $g_u$ denote this minimal upper gradient of $u$.

Let $\{u_k\}_{k\in\N}$ be a sequence of functions in $N^{1,p}(X)$ that satisfy $0\le u_k\le 1$ on $X$,
$u_k=0$ on $X\setminus\Omega$ $p$-q.e., $u_k=1$ $p$-q.e.~on $E$, and 
\[
   \lim_{k\to\infty} \int_\Omega g_{u_k}^p\, d\mu\, =\, \capp{p}(E,\Omega).
\]

Fix $x_0\in\Omega$ and for each positive integer $n$ let $B_n=B(x_0,n)$. Given that 
$0\le u_k\le 1$, the sequences $\{u_k\}_k$ and $\{g_{u_k}\}_k$ are bounded in
$L^p(B_n)$ for each positive integer $n$, and hence because $1<p<\infty$, the uniform 
convexity of $L^p(B_n)$ together with a Cantor diagonalization argument tells us that 
$\{u_k\}_k$ converges weakly to a function $\hat{u}$ in $L^p_{\loc}(X)$ and that 
$\{g_{u_k}\}_k$ converges weakly to $g$ in $L^p_{\loc}(X)$. 

Finally, an application of~\cite[Lemma~3.1]{KaS} to $\{u_k\}_k$ and $\{g_{u_k}\}_k$ in $B_n$
allows us to modify $\hat{u}$ on a set of measure zero to obtain a function $u\in L^p_{\loc}(X)$
that has $g$ as a $p$-weak upper gradient, and furthermore, from~\cite[Lemma~3.1]{KaS} 
and~\cite[Proof of Theorem~3.7]{Sh}, we can conclude that $u=1$ $p$-q.e.~on $E$, 
$u=0$ $p$-q.e.~on $X\setminus\Omega$, and that 
\[
  \int_\Omega g_u^p\, d\mu\le \int_\Omega g^p\, d\mu\
     \le \lim_{k\to\infty} \int_\Omega g_{u_k}^p\, d\mu\, =\, \capp{p}(E,\Omega).
\]
This, together with the definition of $\capp{p}(E,\Omega)$ now completes the proof.
\end{proof}

The results from~\cite{KS} show that such $u$, if non-constant, satisfies $u>0$ on $\Omega$ with
$u<1$ on $X\setminus\overline{E}$.
Of course, should $\capp{p}(E,\Omega)$ be infinite, then no such $u$ exists. 

It is clear that the $p$-potential $u$, corresponding to $E\sub X$, %obtained in computing $\capp{p}(E,\Omega)$, 
has the property that $u$ is a \emph{$p$-superminimizer} in $\Omega$ and a
\emph{$p$-subminimizer} in $X\setminus\overline{E}$; in particular,
$u$ is a \emph{$p$-minimizer} in $\Omega\setminus E$. Here, we say that a function $v\in N^{1,p}_{\loc}(X)$ is
a $p$-superminimizer in an open set $U\subset X$ if, whenever $\varphi\in N^{1,p}(X)$ is a non-negative function
such that $\varphi=0$ on $X\setminus U$, we have
\[
   \inf_{g_v}\int_{\text{supt}(\varphi)} g_v^p\, d\mu
      \le \inf_{g_{v+\varphi}} \int_{\text{supt}(\varphi)} g_{v+\varphi}^p\, d\mu,
\]
and $v$ is a $p$-subminimizer in $U$ if $-v$ is a $p$-superminimizer in $U$. We refer the interested reader to~\cite{KiMa}
for information on minimizers; see also \cite{BB}. In particular, it is known that 
if $v$ is a $p$-superminimizer in $U$ and $w\in N^{1,p}_{\loc}(X)$ is a $p$-minimizer in $U$ such that $w\le v$ holds
$p$-q.e.\ on $X\setminus U$, %(that is, the set $D$ of points $x\in X\setminus U$ for which $w(x)>v(x)$ has total capacity $\text{Cap}_p(D)=0$), 
then $w\le v$ on $U$ as well. This is the so-called \emph{comparison principle}.
Notice also that if $\text{Cap}_p(X\setminus\Omega)=0$ and 
$v\in N^{1,p}_{\loc}(X)$ is a minimizer in $\Omega$, then $v$ is a minimizer in $X$; moreover, 
in this case, if $u$ is
a $p$-potential for $\capp{p}(E,\Omega)$ then it is a $p$-potential for $\capp{p}(E,X)$.

In the proofs of our results the following weak Harnack inequality for $p$-superminimizers is of fundamental importance. See~\cite{KS} for a proof of this lemma.

\begin{lemma}[Weak Harnack]\label{lemma:weak harnack}
There exists constants $A>0$, $C_A\geq 1$, and $q>0$ such that
if $u$ is a $p$-superminimizer in $C_A B\subset\Omega$, then
\[
\bigg(\,\,\vint{2B} u^q\,d\mu \bigg)^{1/q} \leq A \essinf_B u.
\]
\end{lemma}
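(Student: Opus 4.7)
The plan is to follow the De Giorgi--Moser iteration scheme adapted to the metric setting by Kinnunen--Shanmugalingam. The argument naturally splits into a reverse H\"older inequality for small negative powers of $u$, and a crossover to small positive powers via a logarithmic BMO-type estimate combined with the John--Nirenberg inequality on doubling metric measure spaces.

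First I would derive Caccioppoli-type inequalities. Writing $v=u+\eps$ for $\eps>0$ so that $v$ is a positive, uniformly bounded-below $p$-superminimizer, one tests the superminimizer inequality against $\varphi=-\eta^p v^\beta$, where $\eta$ is a Lipschitz cutoff supported in $C_A B$. Using the Leibniz- and chain-type inequalities available for minimal $p$-weak upper gradients on $N^{1,p}(X)$, and choosing $\beta<1-p$, one absorbs the resulting terms to obtain, for $w=v^{(\beta+p-1)/p}$,
\[
\int \eta^p g_w^p\,d\mu \;\leq\; C\int g_\eta^p\, w^p\,d\mu.
\]
The borderline exponent $\beta=1-p$ yields, in the same manner, the logarithmic estimate
\[
\int \eta^p g_{\log v}^p\,d\mu \;\leq\; C\int g_\eta^p\,d\mu.
\]

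Next, feeding the power-type Caccioppoli estimates into the Sobolev--Poincar\'e inequality that follows from doubling together with the $(1,p)$-Poincar\'e inequality, and choosing cutoffs adapted to a shrinking sequence of balls between $B$ and $2B$ (all dilated Poincar\'e balls fitting inside $C_AB$, which is what pins down the constant $C_A$), the standard Moser iteration yields for every $s>0$ the reverse H\"older bound
\[
\bigg(\,\vint{2B} v^{-s}\,d\mu\bigg)^{-1/s} \;\leq\; C\,\essinf_B v.
\]
The logarithmic Caccioppoli inequality combined with the Poincar\'e inequality then shows that $\log v$ has uniformly bounded BMO seminorm on balls inside $C_AB$. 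Invoking a John--Nirenberg inequality valid on doubling metric measure spaces, there exists $q_0>0$ independent of $\eps$ such that
\[
\bigg(\,\vint{2B} v^q\,d\mu\bigg)\bigg(\,\vint{2B} v^{-q}\,d\mu\bigg) \;\leq\; C
\]
for every $0<q<q_0$. Chaining this with the previous display applied with $s=q$, and letting $\eps\to 0$, yields the claimed inequality for any such $q$.

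The main obstacle is the John--Nirenberg step, which governs both the dilation constant $C_A$ (enlargement is needed for the chaining and covering arguments inherent to BMO on doubling spaces) and the exponent $q$ (only sufficiently small $q$ survives the crossover). Verifying that the BMO bound for $\log v$ is independent of the regularization parameter $\eps$ is also a delicate point, but follows from the uniformity of the Caccioppoli constants. Apart from this, the scheme is a technical but fairly standard Moser iteration; the additional subtlety specific to the metric setting is that one must work with minimal $p$-weak upper gradients throughout, exploiting their known behaviour under truncations and powers of positive Newtonian functions.
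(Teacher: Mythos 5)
The paper does not prove this lemma itself but cites Kinnunen--Shanmugalingam \cite{KS}, and your De Giorgi--Moser iteration with power-type and logarithmic Caccioppoli inequalities, BMO bound for $\log v$, and John--Nirenberg crossover is precisely the argument of that reference, so your proposal is correct and follows the same route. One small slip: under the paper's convention that superminimizers are tested against non-negative $\varphi$ vanishing outside $U$, and since $\eta^p v^\beta>0$ whenever $v>0$ (for any $\beta$), the admissible test function is $\varphi=\eta^p v^\beta$ rather than $-\eta^p v^\beta$.
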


\section{Characterizations of quasiadditivity}\label{sect:qadd}

In this section we prove the main result of this note, Theorem~\ref{thm:all}, and provide a 
connection between quasiadditivity and $p$-Hardy inequalities. Recall that we always assume that $1<p<\infty$.

\subsection{The main characterization}

We begin by showing that quasiadditivity property for unions of balls is in fact
sufficient for the quasiadditivity for general sets. Below $C_A$ is the constant from the
weak Harnack inequality and $\lambda$ is the dilatation constant from the $p$-Poincar\'e inequality.

\begin{proposition}\label{prop:q-add}
Let $\Omega\sub X$ be an open 
set with $\Om\ne X$ and let
$\W_c(\Omega)=\{B_i\}_{i\in I}$ be a Whitney cover of  $\Omega$ 
with $c < \min\{(C_A)^{-1},(30\lambda)^{-1}\}$. 
Assume that the quasiadditivity condition holds
for unions of Whitney balls, i.e., 
if $U=\bigcup_{i\in I} B_i$ for some $I\subset\N$, $B_i\in \W_c(\Omega)$, %$I_0\sub I$, 
then
\[
\sum_{i\in I} \capp{p}(B_i,\Omega) \leq C_1  \capp{p}(U,\Omega).
\]
Then the capacity $\capp{p}(\cdot,\Omega)$ is 
quasiadditive with respect to $\W_c(\Omega)$,
i.e., there exists a constant $C>0$ such that
\[
\sum_{i\in \N} \capp{p}(E\cap B_i,\Omega) \leq C \capp{p}(E,\Omega)
\]
for every $E\subset\Omega$.
\end{proposition}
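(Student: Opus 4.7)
The strategy is to use the $p$-potential $u$ of $E$ in $\Omega$ from Lemma~\ref{lem-existence} as a common test function across all Whitney balls, and to split the index set into a ``heavy'' part, on which $u$ already witnesses the capacity of a union of Whitney balls (so the hypothesis applies), and a ``light'' part, on which a localized truncation of $u$ is cheap enough to be summed ball-by-ball via Poincar\'e.

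First I would assume $\capp{p}(E,\Omega)<\infty$ (otherwise the conclusion is trivial) and fix, by Lemma~\ref{lem-existence}, a $p$-potential $u$ for $E$ in $\Omega$: $0\le u\le 1$, $u=1$ $p$-q.e.\ on $E$, $u=0$ $p$-q.e.\ on $X\setminus\Omega$, $u$ is a $p$-superminimizer in $\Omega$, and $\int_\Omega g_u^p\,d\mu=\capp{p}(E,\Omega)$. Writing $q,A,C_A$ for the constants from the weak Harnack inequality (Lemma~\ref{lemma:weak harnack}), the assumption $c<C_A^{-1}$ guarantees $C_A B_i\subset\Omega$ for every $B_i\in\W_c(\Omega)$. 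I then partition
\[
I_1=\Bigl\{i\in\N:\bigl(\vint{2B_i}u^q\,d\mu\bigr)^{1/q}\ge\tfrac{1}{4A}\Bigr\},\qquad I_2=\N\setminus I_1,
\]
and set $U_1=\bigcup_{i\in I_1}B_i$. For $i\in I_1$, the weak Harnack inequality gives $\essinf_{B_i}u\ge 1/(4A^2)$, so (working with the lower semicontinuous representative) the truncation $\min(4A^2u,1)$ is admissible for $\capp{p}(U_1,\Omega)$, producing $\capp{p}(U_1,\Omega)\le(4A^2)^p\capp{p}(E,\Omega)$. Combining this with $\capp{p}(E\cap B_i,\Omega)\le\capp{p}(B_i,\Omega)$ and the hypothesis applied to the union $U_1$ of Whitney balls yields $\sum_{i\in I_1}\capp{p}(E\cap B_i,\Omega)\le C_1(4A^2)^p\capp{p}(E,\Omega)$.

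For $i\in I_2$, I would use the cutoff test function $v_i=2(u-\tfrac12)_+\phi_i$, where $\phi_i$ is Lipschitz with $\phi_i\equiv 1$ on $B_i$, $\phi_i\equiv 0$ off $2B_i\subset\Omega$, and $\Lip(\phi_i)\le C/\rad(B_i)$. Since $u=1$ q.e.\ on $E$, $v_i$ is admissible for $\capp{p}(E\cap B_i,\Omega)$, and the Leibniz rule gives the upper gradient bound $g_{v_i}\le C g_u\chi_{2B_i}+C\rad(B_i)^{-1}\chi_{2B_i\cap\{u>1/2\}}$. The key observation is that $0\le u\le 1$ implies $u_{2B_i}\le(\vint{2B_i}u^q\,d\mu)^{1/q}$ for every $q>0$ (by Jensen when $q\ge 1$, and by the pointwise bound $u^q\ge u$ when $q<1$), so the definition of $I_2$ forces $u_{2B_i}<1/(4A)\le 1/4$ and consequently $\{u>1/2\}\cap 2B_i\subset\{|u-u_{2B_i}|>1/4\}\cap 2B_i$. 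Combining Chebyshev with the $(p,p)$-Poincar\'e inequality (implied by our $(1,p)$-Poincar\'e assumption and doubling via Sobolev--Poincar\'e) then yields $\rad(B_i)^{-p}\mu(\{u>1/2\}\cap 2B_i)\le C\int_{\Lambda B_i}g_u^p\,d\mu$ for some fixed dilation $\Lambda$, and therefore $\capp{p}(E\cap B_i,\Omega)\le C\int_{\Lambda B_i}g_u^p\,d\mu$. Summing over $I_2$ against the bounded overlap of $\{\Lambda B_i\}$ (granted by Lemma~\ref{lemma:no overlap} since $c<(30\lambda)^{-1}$) gives $\sum_{i\in I_2}\capp{p}(E\cap B_i,\Omega)\le C\capp{p}(E,\Omega)$, and adding the $I_1$ contribution finishes the proof. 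The main technical obstacle is calibrating the threshold $1/(4A)$ so that the weak Harnack lower bound on $I_1$ and the Poincar\'e--Chebyshev upper bound on $I_2$ close with compatible constants; keeping track of the specific $q>0$ supplied by weak Harnack, and using $0\le u\le 1$ to control $u_{2B_i}$ uniformly in $q$, is the one place where some care is required.
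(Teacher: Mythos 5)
Your high-level strategy, and in particular the treatment of the heavy index set $I_1$, is essentially the same as the paper's: pick a threshold, show that the potential $u$ is bounded below on the $I_1$ balls via weak Harnack, and feed the hypothesis the union of those balls. Your treatment of the light part $I_2$ is different in spirit --- a localized cutoff $2(u-\tfrac12)_+\phi_i$ together with Chebyshev and the $(p,p)$-Poincar\'e inequality, instead of the paper's route through Mazya's capacitary Poincar\'e inequality applied to $v=1-u$ --- and that alternative would be a perfectly respectable way to close the argument, \emph{if} the pivotal step worked.

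It does not. The gap is the claim that $u_{2B_i}\le\bigl(\vint{2B_i}u^q\,d\mu\bigr)^{1/q}$ for every $q>0$ because $0\le u\le 1$. For $q\ge 1$ this is Jensen, but for $q<1$ the inequality runs the \emph{other} way. Your stated reasoning via $u^q\ge u$ only yields $\vint{2B_i}u\,d\mu\le\vint{2B_i}u^q\,d\mu$; to get from there to $\bigl(\vint{2B_i}u^q\,d\mu\bigr)^{1/q}$ you would then need $\vint{2B_i}u^q\,d\mu\le\bigl(\vint{2B_i}u^q\,d\mu\bigr)^{1/q}$, which is false for a quantity in $[0,1]$ raised to a power $1/q>1$. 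A concrete counterexample: if $u$ equals $1$ on half of $2B_i$ and $0$ on the other half, then $u_{2B_i}=1/2$ while $\bigl(\vint{2B_i}u^q\,d\mu\bigr)^{1/q}=(1/2)^{1/q}<1/2$ for $q<1$. Since the exponent $q$ coming out of the weak Harnack inequality of Lemma~\ref{lemma:weak harnack} is in general $<1$, you cannot deduce $u_{2B_i}<1/4$ on $I_2$, and the inclusion $\{u>1/2\}\cap 2B_i\subset\{|u-u_{2B_i}|>1/4\}\cap 2B_i$ on which your Chebyshev--Poincar\'e step rests is unjustified. The fix is to calibrate the splitting threshold with the $q$-th root: take $C_0=(1/4)^{1/q}/A$ (as the paper does) so that on $I_2$ one gets $\vint{2B_i}u^q\,d\mu<1/4$, hence by Chebyshev $\mu\bigl(\{u\ge(1/2)^{1/q}\}\cap 2B_i\bigr)\le\tfrac12\mu(2B_i)$; this measure estimate then feeds either Mazya's Poincar\'e inequality (the paper's route) or, if you want to keep your cutoff argument, a two-case analysis on the size of $u_{2B_i}$ relative to $(1/2)^{1/q}$ --- in either case the mean $u_{2B_i}$ itself need not, and cannot, be made small.
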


\begin{proof}
The structure of the proof is based on the idea of Aikawa \cite{A}, but given the non-linear nature
of our setting, the tools we employ are completely different.
Let $E\subset\Omega$. If  the 
relative capacity $\capp{p}(E,\Om)$ is infinite, then 
the desired inequality would follow. Therefore we assume that $\capp{p}(E,\Om)<\infty$, and let
$u\in N^{1,p}_{\loc}(\Omega)$ be the $p$-potential corresponding to this capacity.
If $\capp{p}(E,\Om)=0$, then by the monotonicity of $\capp{p}(\cdot,\Omega)$, 
each term in the sum on the left-hand side of the desired
inequality is also zero, and the claim follows. Therefore we will assume that $\capp{p}(E,\Om)>0$,
and so the $p$-potential $u$ is non-constant.

Write $E_i=E\cap B_i$.
Choose $C_0=\big(\frac 1 4\big)^{1/q} \frac 1 A$,
where $q$ and $A$ are the constants from the weak Harnack inequality
(Lemma~\ref{lemma:weak harnack}).
We divide the union $E=\bigcup_{i\in\N} E_i$ into the following two parts:
If $u(x)\geq C_0$ for q.e.\ $x\in B_i$, then $i\in I_1$, and otherwise
$i\in I_2$. Note also that the indices $i$ for which 
$\capp{p}(E_i,\Om)=0$ do not contribute to the sum on the left-hand side
of the desired quasiadditivity inequality. Hence in the following argument we only consider
the indices $i$ for which $\capp{p}(E_i,\Om)>0$.

It is immediate that $\frac u {C_0}$ is an admissible test function for
\[
\capp{p}\Big(\bigcup_{i\in I_1}B_i,\Omega\Big).
\]
Thus, using the assumption that quasiadditivity holds for unions of Whitney balls,
we obtain for all upper gradients $g_u$ of $u$ that
\begin{equation}\label{eq:I_1 is ok}\begin{split}
\sum_{i\in I_1}\capp{p}(E_i,\Omega) & \leq \sum_{i\in I_1}\capp{p}(B_i,\Omega)
\leq C_1 \capp{p}\Big(\bigcup_{i\in I_1}B_i,\Omega\Big) %\\ & 
\leq C_1 \Big(\frac 1 {C_0}\Big)^p \int_\Omega g_u^p\,d\mu. % = C \capp{p}(E,\Omega).
\end{split}
\end{equation}

On the other hand, if $i\in I_2$, then by the weak Harnack inequality
\[
\bigg(\,\,\vint{2B_i} u^q\,d\mu \bigg)^{1/q} \leq A \essinf_{B_i} u < C_0 A,
\]
and thus
\begin{equation}\label{eq:u^q small}
 \int_{2B_i} u^q d\mu \leq (C_0 A)^q \mu(2B_i) = \tfrac 1 4 \mu(2B_i).
\end{equation}
Since $0\leq u^q\leq 1$, it follows that
for the set $U_i=\big\{x\in 2B_i : u^q\geq \frac 1 2 \big\}$ we have
$\mu(U_i)\leq \frac 1 2 \mu(2B_i)$. 

Now write $v=1-u$, whence $v\in N^{1,p}_{\loc}(X)$ and 
the class of upper gradients of $u$ is precisely the class of upper gradients of $v$ as well.
Also, $U_i=\big\{x\in 2B_i : v\leq 1-\big(\frac 1 2\big)^{1/q} \big\}$,
and so
\[
\tfrac 1 2 \mu(2B_i)\leq \mu(2B_i\setminus U_i) = \big\{x\in 2B_i : v\geq 1-\big(\tfrac 1 2\big)^{1/q} \big\}.
\]
This gives a positive lower bound $c_1$ for the mean-value of $v^p$ in $2B_i$;
\[
  (v^p)_{2B_i}\ge \tfrac12\left(1-\tfrac{1}{2^{1/q}}\right)^{p}=:c_1.
\]
We can now use the well-known Mazya's version of the (Sobolev--)Poincar\'e inequality
(see e.g.\ \cite[Chapter~10]{mazya}, and \cite[Proposition~3.2]{B} for the
metric space version):
\begin{equation}\label{eq:mazya}
c_1<\vint{2B_i}v^p\,d\mu\le\frac C{\capp{p}(B_i\cap\{v=0\},2B_i)}
\int_{10\lambda B_i}g_v^p\,d\mu,
\end{equation}
where $g_v$ is an arbitrary upper gradient of $v$ (and thus of $u$ as well).
Since $E_i= B_i\cap\{v=0\}$ by the comparison principle, 
it follows from \eqref{eq:mazya} that
\[\begin{split}
\capp{p}(E_i,\Omega) & \leq \capp{p}(E_i,2B_i) =  \capp{p}\big(B_i\cap\{v=0\},2B_i\big) %\\ & 
\leq C' \int_{10\lambda B_i}g_v^p\,d\mu.
\end{split}\]
Using this and the fact that
the balls $10\lambda B_i$ do not overlap too much,
guaranteed by our choice of the parameter $c$ (with $L\ge 10\lambda$)
and Lemma \ref{lemma:no overlap}, we conclude that
\begin{equation}\label{eq:I_2 is ok}
\sum_{i\in I_2}\capp{p}(E_i,\Omega)\leq C' \sum_{i\in I_2} \int_{10\lambda B_i}g_v^p\,d\mu
\leq C \int_{\Omega}g_v^p\,d\mu. % = C \capp{p}(E,\Omega).
 \end{equation}

The claim now follows by taking the infima over all upper gradients of $u$ in \eqref{eq:I_1 is ok} and \eqref{eq:I_2 is ok}
and combining these two estimates.
\end{proof}

The next lemma can be seen as a counterpart of Proposition~\ref{prop:q-add}
for a Mazya-type condition (cf.\ \cite[\S 2.3]{mazya}):

\begin{proposition}\label{prop:mazja balls to E}
Let $\Omega\sub X$ be an open set and let
$\W_c(\Omega)=\{B_i\}_{i\in I}$ be a Whitney cover of  $\Omega$ 
with $c < \min\{(C_A)^{-1},(30\lambda)^{-1}\}$.
Assume the existence of a constant $C_0>0$ such that 
\begin{equation}\label{eq:assume hardy balls}
\int_U \dom(x)^{-p}\,d\mu(x)\leq C_0 \capp{p}(U,\Omega)
\end{equation}
whenever $U\sub\Omega$ is a union of Whitney balls.
Then there exists a constant $C>0$ such that
\begin{equation*}\label{eq:assume hardy**}
\int_E \dom(x)^{-p}\,d\mu(x)\leq C \capp{p}(E,\Omega)
\end{equation*}
whenever $E\subset\Omega$.
\end{proposition}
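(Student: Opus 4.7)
The proof will mirror that of Proposition~\ref{prop:q-add}, again via the $p$-potential $u$ for $\capp{p}(E,\Omega)$ (we may assume the capacity is finite, else the claim is trivial), the function $v=1-u$, and the same weak Harnack dichotomy. Let $\kappa = A^{-1}(1/4)^{1/q}$ be the threshold constant coming from Lemma~\ref{lemma:weak harnack}. Let $I_1$ consist of those indices $i$ for which $u\ge\kappa$ holds $p$-q.e.\ on $B_i$, and let $I_2$ consist of the remaining indices with $B_i\cap E\ne\emptyset$. Put $U_j=\bigcup_{i\in I_j}B_i$, so that $E\subset U_1\cup U_2$.

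On $U_1$ the hypothesis is applied directly. Since $\min(u/\kappa,1)$ is an admissible test function for $\capp{p}(U_1,\Omega)$, we have $\capp{p}(U_1,\Omega)\le\kappa^{-p}\capp{p}(E,\Omega)$, and \eqref{eq:assume hardy balls} yields
$$\int_{U_1}\dom(x)^{-p}\,d\mu(x) \le C\,\capp{p}(E,\Omega).$$

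On each $B_i$ with $i\in I_2$ we argue locally. Exactly as in the proof of Proposition~\ref{prop:q-add}, the weak Harnack inequality gives $(v^p)_{2B_i}\ge c_1>0$; since $0\le v\le 1$ and $p\ge 1$ we have $v\ge v^p$ pointwise, and so $v_{2B_i}\ge c_1$ as well. Lemma~\ref{lem-existence} and the comparison principle guarantee that $v=0$ $p$-q.e., and hence $\mu$-a.e., on $E_i=E\cap B_i$. The $(p,p)$-Poincar\'e inequality, a consequence of our standing $(1,p)$-Poincar\'e assumption in the doubling setting, then gives
$$c_1^p\,\mu(E_i)\le v_{2B_i}^p\,\mu(E_i)\le\int_{2B_i}|v-v_{2B_i}|^p\,d\mu\le C\,\rad(B_i)^p\int_{10\lambda B_i}g_v^p\,d\mu.$$
Since $\dom(x)\approx\rad(B_i)$ for $x\in B_i$, this rearranges to $\int_{E_i}\dom^{-p}\,d\mu\le C\int_{10\lambda B_i}g_v^p\,d\mu$. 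Summing over $i\in I_2$ and using the bounded overlap of $\{10\lambda B_i\}$ from Lemma~\ref{lemma:no overlap} (guaranteed by $c<(30\lambda)^{-1}$), together with $g_v=g_u$, we obtain
$$\int_{E\cap U_2}\dom^{-p}\,d\mu\le\sum_{i\in I_2}\int_{E_i}\dom^{-p}\,d\mu\le C\int_\Omega g_u^p\,d\mu = C\,\capp{p}(E,\Omega).$$
Combining this with the $U_1$ bound finishes the proof.

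The main point of departure from Proposition~\ref{prop:q-add} is in the $I_2$ step. There one had to bound the capacity $\capp{p}(E_i,\Omega)$, which required the capacitary Mazya--Poincar\'e inequality; here the quantity to control, $\int_{E_i}\dom^{-p}\,d\mu\approx\rad(B_i)^{-p}\mu(E_i)$, is purely measure-based, and a plain $(p,p)$-Poincar\'e argument suffices once the lower bound $v_{2B_i}\ge c_1$ is in hand. The delicate issue is that $\mu(E_i)$ may be arbitrarily small compared to $\mu(B_i)$, so no factor of $\mu(B_i)$ may appear on the right-hand side; the lower bound on $v_{2B_i}$ is exactly what absorbs this.
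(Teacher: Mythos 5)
Your proof is correct. It uses the same two key ingredients as the paper --- the weak Harnack inequality (Lemma~\ref{lemma:weak harnack}) and the $(p,p)$-Poincar\'e inequality --- but organizes the dichotomy differently: you split indices by whether $u\ge\kappa$ holds $p$-q.e.\ on $B_i$ (the same threshold used in Proposition~\ref{prop:q-add}), whereas the paper splits by whether $u_{2B_i}<1/2$ or $u_{2B_i}\ge 1/2$. This swaps which half needs the Harnack lemma. You invoke it on the ``small-$u$'' side, passing from $\essinf_{B_i}u<\kappa$ to $v_{2B_i}\ge c_1$, which then feeds the Poincar\'e estimate $c_1^p\mu(E_i)\le\int_{2B_i}|v-v_{2B_i}|^p\,d\mu\le C\rad(B_i)^p\int_{10\lambda B_i}g_u^p\,d\mu$; on the ``large-$u$'' side, the test function $\min(u/\kappa,1)$ and the hypothesis \eqref{eq:assume hardy balls} finish without Harnack. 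The paper does the opposite: on its ``small-$u$'' side $u_{2B_i}<1/2$ immediately gives $|u-u_{2B_i}|\ge 1/2$ on $E_i$, feeding the Poincar\'e step without Harnack, and the Harnack lemma is then used on the ``large-$u$'' side to upgrade $u_{2B_i}\ge 1/2$ to $\inf_{B_i}u\ge C_1$, producing the test function $u/C_1$. The two arguments are logically dual and quantitatively equivalent; the difference is organizational rather than substantive. One small remark: the comparison principle is not actually needed to see that $v=0$ $p$-q.e.\ on $E_i$ --- this is immediate from Lemma~\ref{lem-existence}, which already gives $u=1$ $p$-q.e.\ on $E$; the comparison principle would be used for the reverse inclusion $\{v=0\}\cap\Omega\subset \overline E$, which your argument does not require.
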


\begin{proof}
Let $E\subset\Omega$. If $\capp{p}(E,\Omega)=\infty$ the claim follows, and thus we may again assume that
$\capp{p}(E,\Omega)<\infty$. Let $u$ be the $p$-potential of $E$ with respect to $\Omega$,
and let $g_u$ be an upper gradient of $u$.
We denote $E_i=E\cap B_i$ and 
split the union $E=\bigcup_i E_i$ into two parts: 
We set $i\in I_1$ if
$u_{2B_i} < 1/2$ and $i\in I_2$ if $u_{2B_i} \geq 1/2$.

In the first case $i\in I_1$ we have 
$|u-u_{2B_i}|\geq 1/2$ in $E_i$, and so, using the $(p,p)$-Poincar\'e inequality
(a consequence of the $(1,p)$-Poincar\'e inequality by \cite[Theorem 5.1]{HaK})
and the bounded overlap of the balls $10\lambda B_i$,
we obtain 
\begin{equation}\label{eq:lotsa grad}
\begin{split}
\sum_{i\in I_1} \int_{E_i} \dom(x)^{-p} \,d\mu(x) & 
\leq C \sum_{i\in I_1} r_i^{-p} \int_{2B_i} |u-u_{2B_i}|^p \,d\mu\\ 
 & \leq C \sum_{i\in I_1} \int_{10\lambda B_i} g_u^p \,d\mu \leq C \int_{\Omega} g_u^p \,d\mu.
\end{split} 
\end{equation}

In the second case $i\in I_2$ it follows from
$u_{2B_i}\geq 1/2$ and $0\le u\le 1$ that for $U_i=\{u\ge 1/4\}\cap 2B_i$,
\[\begin{split}
\tfrac12 & \le\frac{1}{\mu(2B_i)}\left[ \int_{U_i}u\, d\mu+\int_{2B_i\setminus U_i}u\, d\mu\right] \\ 
             & \le\frac{1}{\mu(2B_i)}\left[\mu(U_i)+\tfrac14\mu(2B_i\setminus U_i)\right]
                 \le\frac{\mu(U_i)}{\mu(2B_i)}+\tfrac14,
\end{split} \]
from which we obtain
\[
\mu\big(\{x\in 2B_i : u(x)\geq 1/4\}\big)\geq \tfrac 1 4 \mu(2B_i).\]
Thus, by the weak Harnack inequality for superminimizers, we
obtain that
\[
\begin{split}
\inf_{B_i} u & \geq A^{-1}\bigg(\,\,\vint{2B_i} u^q\,d\mu \bigg)^{1/q} %\\ & 
\geq C \bigg(\mu(2B_i)^{-1}\int_{\{u\geq 1/4\}\cap 2B_i} (1/4)^q\,d\mu \bigg)^{1/q}\geq C_1
\end{split}
\]
for each $i\in I_2$.
Hence the function $u/C_1$ is an admissible test function for
\[
\capp{p}\Big(\bigcup_{i\in I_2}B_i,\Omega\Big).  
\]
Using the bounded overlap of the balls $B_i$ 
and the assumption \eqref{eq:assume hardy balls},
we conclude that
\begin{equation}\begin{split}\label{eq:here too}
\sum_{i\in I_2} \int_{E_i} \dom(x)^{-p} \,d\mu(x) & \leq \sum_{i\in I_2} \int_{B_i} \dom(x)^{-p} \,d\mu(x)\\
& \leq C \int_{\bigcup_{i\in I_2}B_i} \dom(x)^{-p} \,d\mu(x)\\
& \leq C \capp{p}\Big(\bigcup_{i\in I_2}B_i,\Omega\Big)\leq \frac{C}{C_1^p}\ \int_\Omega g_u^{p}\, d\mu.
\end{split}\end{equation}
The lemma follows from estimates \eqref{eq:lotsa grad} and \eqref{eq:here too} by taking the infimum over all upper gradients of $u$.
\end{proof}

Combining the conditions from Propositions \ref{prop:q-add} and \ref{prop:mazja balls to E}, we
arrive at the main result of this section:

\begin{thm}\label{thm:all}
Let 
$\Omega\sub X$ be an open set, and let
$\W_c(\Omega)=\{B_i\}_{i\in \mathbb{N}}$ be a Whitney cover of $\Omega$ 
with $c < \min\{(C_A)^{-1},(30\lambda)^{-1}\}$.
Then the following conditions are (quantitatively)
equivalent:

(a) There exist $C>0$ such that 
\begin{equation*}\label{eq:assume hardy*}
\int_E \dom(x)^{-p}\,d\mu\leq C \capp{p}(E,\Omega)
\end{equation*}
for all $E\sub\Omega$.

(b) There exist $C>0$ such that
\begin{equation*}\label{eq:assume hardy balls*}
\int_U \dom(x)^{-p}\,d\mu\leq C \capp{p}(U,\Omega)
\end{equation*}
whenever $U=\bigcup_{i\in I} B_i$ for $B_i\in \W_c(\Omega)$ and $I\sub\N$.

(c) There exist $C>0$ such that
\[
\sum_{i\in \N} \capp{p}(E\cap B_i,\Omega) \leq C \capp{p}(E,\Omega)
\]
for all $E\sub\Omega$, and the capacity estimate \eqref{eq:est for cap} holds.

(d) There exist $C>0$ such that
\[
\sum_{i\in I} \capp{p}(B_i,\Omega) \leq C  \capp{p}(U,\Omega)
\]
whenever $U=\bigcup_{i\in I} B_i$ for $B_i\in \W_c(\Omega)$ and $I\sub\N$,
and the capacity estimate \eqref{eq:est for cap} holds.
\end{thm}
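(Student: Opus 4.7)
The plan is to establish the theorem via three separate equivalences, $(a)\Leftrightarrow(b)$, $(c)\Leftrightarrow(d)$, and $(b)\Leftrightarrow(d)$, which together close a complete cycle. Two of these reduce immediately to the propositions already proved, combined with trivial specializations, while the third rests on Lemma \ref{lemma:main for balls} together with the key observation that (b) already encodes the capacity estimate \eqref{eq:est for cap}.

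The equivalences $(a)\Leftrightarrow(b)$ and $(c)\Leftrightarrow(d)$ are essentially formal. For $(a)\Rightarrow(b)$ and $(c)\Rightarrow(d)$, I would simply take $E=U$; in the latter case, since $B_i\subset U$ for $i\in I$ forces $E\cap B_i=B_i$, the sum on the left of (d) is dominated by the sum on the left of (c). The reverse direction $(b)\Rightarrow(a)$ is precisely Proposition \ref{prop:mazja balls to E}, while $(d)\Rightarrow(c)$ is exactly Proposition \ref{prop:q-add}. The capacity estimate \eqref{eq:est for cap} appears explicitly in both (c) and (d) and transfers between them without further comment.

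I expect the main obstacle to lie in the equivalence $(b)\Leftrightarrow(d)$, where one must bridge the integral form on the left of (b) to the summed-capacity form on the left of (d). The crucial observation is that (b) is self-bootstrapping: specializing it to $U=B_i$ for a single Whitney ball gives $\int_{B_i}\dom(x)^{-p}\,d\mu\leq C\capp{p}(B_i,\Omega)$, and since $\dom(x)\approx\rad(B_i)$ throughout $B_i$, the left-hand side is comparable to $\rad(B_i)^{-p}\mu(B_i)$. This produces the lower bound in \eqref{eq:est for cap} (the upper bound being always valid), after which Lemma \ref{lemma:main for balls} becomes applicable, yielding
\[
\sum_{i\in I}\capp{p}(B_i,\Omega)\approx\int_U\dom(x)^{-p}\,d\mu\leq C\capp{p}(U,\Omega)
\]
for every union $U=\bigcup_{i\in I}B_i$, which is (d). Conversely, assuming (d) we have the capacity estimate by hypothesis, so Lemma \ref{lemma:main for balls} directly converts the sum in (d) back into the integral, recovering (b). With this equivalence in place, the cycle closes and the quantitative nature of each step is preserved throughout.
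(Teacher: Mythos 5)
Your proof is correct and follows essentially the same route as the paper: the cycle is closed via Proposition~\ref{prop:mazja balls to E} for $(b)\Rightarrow(a)$, Proposition~\ref{prop:q-add} for $(d)\Rightarrow(c)$, the trivial specializations $E=U$ for $(a)\Rightarrow(b)$ and $(c)\Rightarrow(d)$, and Lemma~\ref{lemma:main for balls} for $(b)\Leftrightarrow(d)$. The one point you spell out more carefully than the paper is the bootstrap step in $(b)\Rightarrow(d)$: applying $(b)$ to a single Whitney ball $U=B_i$ and using $\dom(x)\approx\rad(B_i)$ on $B_i$ immediately yields the lower bound in \eqref{eq:est for cap}, so that Lemma~\ref{lemma:main for balls} becomes available and the capacity estimate required by $(d)$ is actually a consequence of $(b)$ rather than an independent hypothesis; the paper simply states that \eqref{eq:est for cap} is assumed in parts $(c)$ and $(d)$ without flagging this observation.
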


\begin{proof}
The implications \emph{(a)}$\implies$\emph{(b)} and \emph{(c)}$\implies$\emph{(d)}
are trivial and the implications converse to these are Proposition~\ref{prop:mazja balls to E} and
Proposition~\ref{prop:q-add}, respectively. As a link between these two equivalences we have
\emph{(b)}$\iff$\emph{(d)} from
Lemma~\ref{lemma:main for balls}, 
and here the 
lower bound of~\eqref{eq:est for cap} 
is needed to pass from \emph{(d)} to \emph{(b)}. Hence we assume the validity of~\eqref{eq:est for cap}
in parts (c) and (d). Recall that the validity of~\eqref{eq:est for cap} is guaranteed by Lemma~\ref{lemma:cap low bound} when the hypotheses of this lemma are satisfied, or by the uniform
$p$-fatness of $X\setminus\Omega$.
\end{proof}

\subsection{The Hardy connection}\label{sect:hardy}

We say that an open set $\Omega\sub X$ \emph{admits a $p$-Hardy inequality}
if there exists a constant $C>0$ such that the inequality
\begin{equation*}\label{eq:hardy*}
\int_{\Omega} |u(x)|^p\, \dom(x)^{-p}\,d\mu(x)
   \leq C\int_{\Omega} g_u^p\,d\mu,
\end{equation*}
holds for all $u\in N^{1,p}(\Omega)$ with $u=0$ on $X\setminus \Omega$
and for all upper gradiets $g_u$ of $u$.

Let us record the following Mazya-type characterization for Hardy inequalities.

\begin{lemma}\label{lemma:mazya and hardy}
An open set $\Omega\sub X$ admits a $p$-Hardy inequality
if and only if
\begin{equation}\label{eq:mazya and hardy}
 \int_E \dom(x)^{-p}\,d\mu(x) \leq C \capp{p}(E,\Omega)
\end{equation}  
for all $E\subset\Omega$.
\end{lemma}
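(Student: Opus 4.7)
The plan is to prove the two directions separately; the forward implication is a one-line consequence of the definition of capacity, while the reverse direction requires the standard dyadic truncation argument of Maz'ya type.

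For the direction ``Hardy $\Rightarrow$ \eqref{eq:mazya and hardy}'', fix $E\subset\Omega$ with $\capp{p}(E,\Omega)<\infty$ (the other case being trivial), and let $u$ be any capacity test function for $E$, with an upper gradient $g_u$. Since $u\ge 1$ on $E$ and $u=0$ on $X\setminus\Omega$, the $p$-Hardy inequality applied to $u$ yields
\[
\int_E\dom(x)^{-p}\,d\mu(x)\le\int_\Omega |u|^p\dom(x)^{-p}\,d\mu(x)\le C\int_\Omega g_u^p\,d\mu.
\]
Taking the infimum over all admissible pairs $(u,g_u)$ gives \eqref{eq:mazya and hardy}.

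For the converse, let $u\in N^{1,p}(\Omega)$ with $u=0$ on $X\setminus\Omega$ and let $g_u$ be any upper gradient of $u$; by replacing $u$ with $|u|$ (which has the same upper gradients) we may assume $u\ge 0$. For $k\in\Z$ set $E_k=\{x\in\Omega:u(x)>2^k\}$, so that the $E_k$ are nested and $E_k\subset\Omega$. First I would split $u^p$ dyadically: since $2^k<u\le 2^{k+1}$ on $E_k\setminus E_{k+1}$,
\[
\int_\Omega u^p\dom^{-p}\,d\mu\le 2^p\sum_{k\in\Z}2^{kp}\int_{E_k\setminus E_{k+1}}\dom^{-p}\,d\mu\le 2^p\sum_{k\in\Z}2^{kp}\int_{E_k}\dom^{-p}\,d\mu.
\]
Next I would apply the hypothesis \eqref{eq:mazya and hardy} to each level set $E_k$ to obtain
\[
\sum_{k\in\Z}2^{kp}\int_{E_k}\dom^{-p}\,d\mu\le C\sum_{k\in\Z}2^{kp}\capp{p}(E_k,\Omega).
\]
To bound the capacities, I would use the truncation $v_k=\min\{(u/2^{k-1}-1)_+,1\}$, which equals $1$ on $E_k$, vanishes outside $E_{k-1}\subset\Omega$, and admits the upper gradient $2^{-(k-1)}g_u\,\Char{\{2^{k-1}<u\le 2^k\}}$. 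Hence $v_k$ is a capacity test function for $E_k$, and
\[
\capp{p}(E_k,\Omega)\le 2^{-(k-1)p}\int_{\{2^{k-1}<u\le 2^k\}}g_u^p\,d\mu.
\]
Finally I would sum in $k$, using that the annuli $\{2^{k-1}<u\le 2^k\}$ are pairwise disjoint, which gives a telescoping bound
\[
\sum_{k\in\Z}2^{kp}\capp{p}(E_k,\Omega)\le 2^p\sum_{k\in\Z}\int_{\{2^{k-1}<u\le 2^k\}}g_u^p\,d\mu\le 2^p\int_\Omega g_u^p\,d\mu.
\]
Combining the three displays yields the $p$-Hardy inequality, with a constant that is quantitative in the constant of \eqref{eq:mazya and hardy}.

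The only real obstacle is the book-keeping in the dyadic decomposition, in particular ensuring that the truncations $v_k$ are genuinely admissible (they vanish on $X\setminus\Omega$ because $u$ does) and that the resulting upper gradients have disjoint supports so that the final sum telescopes into $\int_\Omega g_u^p\,d\mu$ without any overlap constant.
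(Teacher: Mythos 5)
Your proof is correct, and for the reverse direction it takes a genuinely different (and more self-contained) route than the paper. The forward direction (Hardy $\Rightarrow$ the capacity estimate \eqref{eq:mazya and hardy}) is essentially identical to the paper's: apply the Hardy inequality to a capacity test function for $E$ and take infima. For the reverse direction, the paper does not argue directly; it cites \cite[Theorem~4.1]{KoSh}, which establishes the equivalence of the $p$-Hardy inequality with the Maz'ya condition restricted to \emph{compact} subsets $K\subset\Omega$, and then only verifies the (easy) implication Hardy $\Rightarrow$ \eqref{eq:mazya and hardy} for arbitrary $E$. You instead prove the implication ``\eqref{eq:mazya and hardy} $\Rightarrow$ Hardy'' from scratch via the classical Maz'ya truncation argument: dyadic level sets $E_k=\{u>2^k\}$, the Lipschitz truncations $v_k=\min\{(u/2^{k-1}-1)_+,1\}$ as admissible test functions for $\capp{p}(E_k,\Omega)$, and the disjointness of the annuli $\{2^{k-1}<u\le 2^k\}$ so that the resulting sum collapses into $\int_\Omega g_u^p\,d\mu$. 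This is precisely the argument underlying the cited reference (and \cite[\S 2.3]{mazya}), so you are in effect unpacking the citation; the benefit is that your proof is fully self-contained and needs no appeal to the compact-set version, while the paper's proof is shorter at the cost of deferring the substantive step. One small point you implicitly handle but should make explicit: the truncations $v_k$, extended by zero, belong to $N^{1,p}(X)$, which holds since each $v_k$ is a Lipschitz composition of the Newtonian function $u$ and is supported inside $\Omega$.
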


\begin{proof}
For compact sets $K\sub\Omega$, the above characterization is proven in the 
metric space setting in~\cite[Theorem 4.1]{KoSh} (see also
\cite[\S 2.3]{mazya} in the Euclidean setting). 
Thus it suffices to show that if $\Omega$ admits a $p$-Hardy inequality and 
$E\sub\Omega$ is an arbitrary subset, then estimate \eqref{eq:mazya and hardy}
holds. If $\capp{p}(E,\Omega)=\infty$, then there is nothing to prove, and on the 
other hand if $\capp{p}(E,\Omega)<\infty$, then the $p$-Hardy inequality, used for capacity test functions
$u_k$ with $\lim_{k\to\infty} \int_\Omega g_{u_k}\,d\mu = \capp{p}(E,\Omega)$, 
yields the desired estimate \eqref{eq:mazya and hardy}.
\end{proof}

In other words, an open set $\Omega\sub X$ admits a $p$-Hardy inequality
if and only if the assertion \emph{(a)} of Theorem \ref{thm:all} holds. 
This leads immediately to the following corollary.

\begin{coro}\label{coro:q-add from hardy}
Let $\Omega\sub X$ be an open set and let 
$\W_c(\Omega)$ be a Whitney cover of $\Omega$
with a suitably small parameter $0<c\leq 1/2$.
Then $\Omega$ admits a $p$-Hardy inequality if and only if
the capacity $\capp{p}(\cdot,\Omega)$ is
quasiadditive with respect to $\W_c(\Omega)$
and the capacity estimate \eqref{eq:est for cap} holds for all balls $B\sub\Omega$.
\end{coro}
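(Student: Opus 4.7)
The plan is to chain together two earlier equivalences. By Lemma~\ref{lemma:mazya and hardy}, $\Omega$ admits a $p$-Hardy inequality if and only if condition \emph{(a)} of Theorem~\ref{thm:all} holds, namely
\[
\int_E \dom(x)^{-p}\,d\mu \le C\,\capp{p}(E,\Omega)\quad\text{for every } E\subset\Omega.
\]
By Theorem~\ref{thm:all}, condition \emph{(a)} is (quantitatively) equivalent to condition \emph{(c)}, which states precisely that $\capp{p}(\cdot,\Omega)$ is quasiadditive with respect to $\W_c(\Omega)$ and that the capacity estimate~\eqref{eq:est for cap} is valid. Composing the two equivalences delivers the corollary.

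Concretely, I would first fix $c$ so that $c<\min\{(C_A)^{-1},(30\lambda)^{-1}\}$; this is what ``suitably small'' means here, as demanded by Theorem~\ref{thm:all}. For the forward direction, the $p$-Hardy inequality yields \emph{(a)} via Lemma~\ref{lemma:mazya and hardy}, and then the implication \emph{(a)}$\Rightarrow$\emph{(c)} of Theorem~\ref{thm:all} produces the quasiadditivity. The lower bound in~\eqref{eq:est for cap} does not need to be assumed separately on this side: it is the standard consequence of the $p$-Hardy inequality applied to capacity test functions of a ball $B\subset\Omega$ (as recorded in Section~\ref{sect:W}), while the upper bound is universal in our setting. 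For the converse direction, the hypotheses are exactly \emph{(c)} of Theorem~\ref{thm:all}, so \emph{(a)} holds, and Lemma~\ref{lemma:mazya and hardy} converts \emph{(a)} back into the $p$-Hardy inequality.

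I do not anticipate any genuine obstacle here; this is a packaging corollary that assembles ingredients already proved. The only cosmetic subtlety is that the corollary formulates~\eqref{eq:est for cap} for \emph{all} balls $B\subset\Omega$, whereas Theorem~\ref{thm:all}\emph{(c)} only requires it on the Whitney balls of $\W_c(\Omega)$. This is harmless in both directions: in the forward direction the Hardy inequality provides the lower bound for every ball, and in the reverse direction we only need the version on Whitney balls in order to feed Theorem~\ref{thm:all}.
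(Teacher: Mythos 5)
Your argument matches the paper's proof exactly: the corollary is obtained by composing Lemma~\ref{lemma:mazya and hardy} (Hardy $\Leftrightarrow$ condition \emph{(a)}) with the equivalence \emph{(a)}$\Leftrightarrow$\emph{(c)} of Theorem~\ref{thm:all}, precisely as the paper does in the sentence preceding the corollary. One inaccuracy worth noting is your remark that the $p$-Hardy inequality ``provides the lower bound for every ball'': applying Hardy to a capacity test function of $B$ only gives $\int_B\dom(x)^{-p}\,d\mu\le C\capp{p}(B,\Omega)$, which translates into $c\,\rad(B)^{-p}\mu(B)\le\capp{p}(B,\Omega)$ only when $\rad(B)\approx\dom(x_B)$, i.e.\ for Whitney-type balls (for a tiny ball deep inside $\Omega$ the integral is far smaller than $\rad(B)^{-p}\mu(B)$); this does not affect the proof since \eqref{eq:est for cap} is formulated in the paper for $B\in\W_c(\Omega)$, which is all the corollary and Theorem~\ref{thm:all}\emph{(c)} require.
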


Since uniform $p$-fatness of the complement $X\setminus\Omega$ is a sufficient condition for $p$-Hardy inequalities in our setting (see \cite[Corollary 6.1]{BMS} and \cite{KLT}), we obtain a concrete sufficient condition for the quasiadditivity of the $p$-capacity:

\begin{coro}\label{coro:q-add from p-fat}
Let $\Omega\sub X$ be an open set and let 
$\W_c(\Omega)$ be a Whitney cover of $\Omega$
with a suitably small parameter $0<c\leq 1/2$.
Assume further that the complement $X\setminus\Omega$ is uniformly $p$-fat.
Then the capacity $\capp{p}(\cdot,\Omega)$ is
quasiadditive with respect to $\W_c(\Omega)$.
\end{coro}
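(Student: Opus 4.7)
The plan is to deduce the result as an immediate consequence of the previously established Corollary~\ref{coro:q-add from hardy}. That corollary says that, for a suitable Whitney cover, quasiadditivity of $\capp{p}(\cdot,\Omega)$ is equivalent to the conjunction of (i) $\Omega$ admitting a $p$-Hardy inequality and (ii) the two-sided capacity estimate~\eqref{eq:est for cap} holding for all balls $B\subset\Omega$. Thus it is enough to verify both (i) and (ii) under the hypothesis that $X\setminus\Omega$ is uniformly $p$-fat.

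For (i), I would simply cite \cite[Corollary~6.1]{BMS} (and \cite{KLT}), which shows that in our metric setting the uniform $p$-fatness of $X\setminus\Omega$ is a sufficient condition for the $p$-Hardy inequality on $\Omega$. No further work is needed here.

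For (ii), the upper bound in~\eqref{eq:est for cap} always holds under our standing doubling and Poincar\'e assumptions, via the standard test function $u(x)=[1-\dom(x)/\rad(B)]_+$, as was already noted just after~\eqref{eq:est for cap}. For the lower bound, fix a Whitney ball $B\in\W_c(\Omega)$ and let $u$ be an admissible capacity test function for $\capp{p}(B,\Omega)$ with upper gradient $g_u$. Since $u=1$ on $B$ and $u=0$ on $X\setminus\Omega$, the $p$-Hardy inequality obtained in step (i), applied to $u$, gives
\[
\int_B \dom(x)^{-p}\,d\mu(x)\le \int_\Omega |u|^p\,\dom(x)^{-p}\,d\mu\le C\int_\Omega g_u^p\,d\mu.
\]
Because $\dom(x)\approx \rad(B)$ on $B$ with constants depending only on $c$, the left side is bounded below by a constant multiple of $\rad(B)^{-p}\mu(B)$. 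Taking the infimum over all admissible $u$ and their upper gradients then yields $\capp{p}(B,\Omega)\ge c_1\,\rad(B)^{-p}\mu(B)$, which is the required lower bound in~\eqref{eq:est for cap}.

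With (i) and (ii) established, Corollary~\ref{coro:q-add from hardy} applies directly and gives the quasiadditivity of $\capp{p}(\cdot,\Omega)$ with respect to $\W_c(\Omega)$. I do not expect any real obstacle: the only non-trivial step is the short derivation of the capacity lower bound from the Hardy inequality, and this is already flagged in the discussion after Lemma~\ref{lemma:cap low bound}.
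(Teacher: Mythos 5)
Your proof is correct and follows essentially the same route the paper intends: cite \cite{BMS} and \cite{KLT} for the step from uniform $p$-fatness to the $p$-Hardy inequality, obtain the lower bound in~\eqref{eq:est for cap} from that Hardy inequality applied to a capacity test function of $B$ (exactly as the paper indicates in the discussion after~\eqref{eq:est for cap}), and then invoke Corollary~\ref{coro:q-add from hardy}. One small caveat: you paraphrase Corollary~\ref{coro:q-add from hardy} as ``quasiadditivity is equivalent to (Hardy and~\eqref{eq:est for cap})'' whereas it actually states ``Hardy is equivalent to (quasiadditivity and~\eqref{eq:est for cap})''; the direction you use is still a valid consequence, and in fact the forward implication of the corollary yields quasiadditivity from the Hardy inequality alone, so your separate verification of~\eqref{eq:est for cap} is a harmless redundancy rather than a gap.
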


\section{Quasiadditivity and the Aikawa dimension}\label{sect:q and a}

In this section we focus on open sets $\Om\subset X$ that satisfy 
$\codima(X\setminus\Om)>p$ (recall the definition from Section~\ref{sect:metric}). 
In this case we also have that  $\codimh(X\setminus\Om)>p$, and hence it follows 
from~\cite[Proposition~4.1]{lenb} that $\text{Cap}_p(X\setminus\Om)=0$. Therefore, as 
was remarked in Section~\ref{sect:nonlin-pot-thry}, we know that
$\capp{p}(E,\Om)=\capp{p}(E,X)$ for every $E\sub \Omega$. 
Recall from Section~\ref{sect:sobo} that if $X$ is $p$-parabolic, then actually $\capp{p}(E,X)=0$
for all bounded $E\sub X$. Thus, if $X$ is $p$-parabolic and $\Omega\sub X$ is such that 
$\text{Cap}_p(X\setminus\Om)=0$, then $\Om$ satisfies the quasiadditivity property trivially; 
the same is also true if $X$ is bounded and 
$\text{Cap}_p(X\setminus\Om)=0$. Hence in this section we assume that
$X$ is unbounded and $p$-hyperbolic.

We say that an open set $\Omega=X\setminus E$ and a related Whitney cover $\W=\W_c(\Omega)$
satisfy a \emph{discrete John condition} if there exist $L>1$, 
$a>1$, and $C>0$ such that for each $B\in\W$ we
find a chain $\mathcal{C}(B)=\{B_m\}_{m=0}^\infty$ of Whitney balls $B_m\in\W(\Omega)$, with $B_{0}=B$, such that 
$B_{m}\cap B_{m+1}\neq \emptyset$, $B\sub LB_m$, and $\rad(B_m)\geq Ca^{m} \rad(B)$ for each $m\in\mathbb{N}$.
This condition is satisfied, for instance, if $\Omega$ is an unbounded John domain (see \cite{V}); 
similar chain conditions have been used e.g.\ in \cite{HaK,HEI}.
Notice in particular that since our open sets are unbounded, there can not exist a `John center' as in the usual John condition for bounded domains; essentially the `point at infinity' acts as a John center. On the other hand, 
the domain $\Om=\{(x,y)\in\mathbb{R}^2\, :\, 0<y<|x|+1\}$ satisfies the discrete John condition, but is
not an unbounded John domain (in the sense of \cite{V}).

\begin{proposition}\label{prop:john and aikawa}
Let $\Omega\sub X$ be an open set with $\codima(X\setminus\Omega)>p$.  
Assume furthermore that $\Omega$ satisfies the above discrete John condition for a Whitney cover 
$\W_c(\Omega)$ with $c\leq (6\lambda)^{-1}$.
Then $\capp{p}(\cdot,\Omega)$ is quasiadditive with
respect to $\W_c(\Omega)$ and $\Omega$ admits a $p$-Hardy inequality.
\end{proposition}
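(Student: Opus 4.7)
\emph{Plan.} I would verify condition (b) of Theorem~\ref{thm:all}, from which the $p$-Hardy inequality follows immediately via the trivial direction and Lemma~\ref{lemma:mazya and hardy}; the quasiadditivity then follows via Corollary~\ref{coro:q-add from hardy} once we also know the capacity estimate~\eqref{eq:est for cap}. A pair of preliminaries: the hypothesis $\codima(X\setminus\Omega)>p$ gives $\codimh(X\setminus\Omega)>p$, whence $\text{Cap}_p(X\setminus\Omega)=0$ by~\cite[Proposition~4.1]{lenb}, so that $\capp{p}(\cdot,\Omega)=\capp{p}(\cdot,X)$ on subsets of $\Omega$; and since $\codima(E)\le Q_u$ for every nonempty $E$ (testing the defining Aikawa integral at a point of $E$ against~\eqref{eq: d dim conv}), we have $p<Q_u$, so together with the standing assumption that $X$ is unbounded Lemma~\ref{lemma:cap low bound} supplies the required lower bound in~\eqref{eq:est for cap}.

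To prove (b), I would fix $U=\bigcup_{i\in I}B_i$, let $u$ be the $p$-potential of $U$ in $\Omega$, and write $r_i=\rad(B_i)$. Since $\dom\approx r_i$ on $B_i$ and the $B_i$ have bounded overlap, $\int_U\dom^{-p}\,d\mu\le C\sum_{i\in I}r_i^{-p}\mu(B_i)$, so it is enough to show $\sum_{i\in I}r_i^{-p}\mu(B_i)\le C\capp{p}(U,\Omega)=C\int_\Omega g_u^p\,d\mu$. For each $i\in I$ the John chain $\mathcal{C}(B_i)=\{B_{i,m}\}$ has geometric growth $r_{i,m}\ge Ca^mr_i$, whence $\mu(B_{i,m})\to\infty$ and the $L^p$ function $u$ satisfies $u_{B_{i,m}}\to 0$. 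Since $u=1$ on $B_i=B_{i,0}$, the telescoping $1=\sum_{m\ge 0}(u_{B_{i,m}}-u_{B_{i,m+1}})$ together with the Poincar\'e inequality applied on pairs $B_{i,m},B_{i,m+1}$ (whose radii are comparable because adjacent Whitney balls are) yields
\[
 1\le C\sum_{m\ge 0}r_{i,m}\bigg(\,\vint{\lambda B_{i,m}}g_u^p\,d\mu\bigg)^{1/p}.
\]

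Next I would fix $\eps>0$ with $q:=(1+\eps)p<\codima(X\setminus\Omega)$, available thanks to the gap $\codima(X\setminus\Omega)>p$. A H\"older inequality in $m$ with weights $r_{i,m}^{-\eps}$, summable to $Cr_i^{-\eps}$ by the geometric growth, raised to the $p$th power gives
\[
 1\le C\,r_i^{-\eps p}\sum_{m\ge 0}r_{i,m}^{q}\vint{\lambda B_{i,m}}g_u^p\,d\mu,
\]
so $r_i^{-p}\mu(B_i)\le Cr_i^{-q}\mu(B_i)\sum_m r_{i,m}^q\vint{\lambda B_{i,m}}g_u^p\,d\mu$. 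Summing in $i\in I$ and exchanging the order of summation over $i$ and Whitney balls $B$, the crux of the argument --- and the main obstacle --- is the counting estimate
\[
 S(B):=\sum_{\substack{(i,m):\,B_{i,m}=B\\ i\in I}}r_i^{-q}\mu(B_i)\le C\rad(B)^{-q}\mu(B).
\]
Every contributing $B_i$ is contained in $LB$ by the John condition and satisfies $\dom\approx r_i$ on itself, so the bounded overlap of Whitney balls reduces $S(B)$ to at most $C\int_{LB}\dom^{-q}\,d\mu$; the Aikawa codimension bound applied at a point of $X\setminus\Omega$ within distance $\lesssim\rad(B)$ of $B$ (with a ball enlarged by doubling to cover $LB$) then delivers $S(B)\le C\rad(B)^{-q}\mu(B)$. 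Substituting cancels the factor $r_{i,m}^q=\rad(B)^q$, reducing the double sum to $\sum_B\mu(B)\vint{\lambda B}g_u^p\,d\mu\le C\int_\Omega g_u^p\,d\mu$ by the bounded overlap of $\lambda B$, which closes (b) and hence the proposition.
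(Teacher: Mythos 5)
Your proof is correct and follows the paper's overall strategy: verify condition~(b) of Theorem~\ref{thm:all} for unions of Whitney balls by a chaining argument driven by the discrete John condition, and then sum over Whitney balls using the Aikawa codimension bound. The one genuinely different step is how you close the chaining estimate. The paper uses a pigeonhole argument: from the chained Poincar\'e bound $\alpha/2\le C\sum_m r_{i,m}\bigl(\vint{2\lambda B_{i,m}}g_u^p\bigr)^{1/p}$ it extracts a \emph{single} good chain ball $B_i^*=B_{i,m_i}$ satisfying~\eqref{eq: one big} (comparison with a convergent geometric series), and then reorganizes the sum over $i$ by grouping according to $B_i^*$. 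You instead keep the whole chain, applying H\"older with geometrically decaying weights $r_{i,m}^{-\eps}$ (choosing $q=(1+\eps)p<\codima(X\setminus\Omega)$), and then sum over all pairs $(i,m)$. Both routes funnel into essentially the same counting estimate --- the paper's~\eqref{eq:aikawa helps}, your $S(B)$ --- where the inclusion $B_i\sub LB$ from the John condition and the Aikawa bound deliver $\lesssim\rad(B)^{-q}\mu(B)$. The H\"older variant is a clean, classical alternative and arguably smoother, as it avoids the ``at least one term is large'' selection.

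One preliminary remark should be removed or corrected: the claim that $\codima(E)\le Q_u$ for every nonempty $E\sub X$ (and hence $p<Q_u$) is false in general. The Aikawa integral at $x\in E$ sees the \emph{local} decay of $\mu$ near $x$, and this can be faster than the global rate $Q_u$ if $E$ lies in a region of higher local dimension; for instance $X=\R^2$ with $d\mu=|t|^{\alpha}\,ds\,dt$ ($\alpha>0$) has $Q_u=2$, yet a two-point set $E$ on the axis $\{t=0\}$ has $\codima(E)=2+\alpha>Q_u$. So you cannot in general invoke Lemma~\ref{lemma:cap low bound}. Fortunately this preliminary is superfluous: once condition~(b) is established, the lower bound in~\eqref{eq:est for cap} follows for free by taking $U$ a single Whitney ball $B_i$, since $\int_{B_i}\dom^{-p}\,d\mu\approx\rad(B_i)^{-p}\mu(B_i)$, and this is precisely how the paper's logical structure tacitly accounts for~\eqref{eq:est for cap} in this proposition. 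So the gap is easily plugged and does not affect the conclusion.
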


\begin{proof}
By Theorem \ref{thm:all} and Corollary \ref{coro:q-add from hardy}, it suffices to show that
there is a constant $C>0$ such that 
if $U=\bigcup_{i\in I} B_i$, $B_i\in \W_c(\Omega)$, for some $I\subset\mathbb{N}$, 
then
\begin{equation}\label{eq:assume hardy balls again}
\int_U \dom(x)^{-p}\,d\mu(x)\leq C \capp{p}(U,\Omega). 
\end{equation}
Fix such a set $U$, and write $r_i=\rad(B_i)$, $i\in I$. We may clearly assume that
$\capp{p}(U,\Omega)<\infty$. Let $u$ be a capacity test-function for $U$.
Then $u_{B_i}=1$ for each $i\in I$
(and thus $u_{2B_i}\geq \alpha$ for some $\alpha>0$ since $0\leq u\leq 1$). 
On the other hand, since $u\in L^p(\Omega)$, we find, using the discrete John condition, 
for each $i\in I$ a chain of Whitney balls $B_{i,m}$, $m=0,1,\dots,M_i$,
where $B_{i,0}=B_i$, $B_{i,m-1}\cap B_{i,m}\neq\emptyset$,
$\rad(B_{i,m})\geq C a^m r_{i}$ for all $m=1,\dots,M_i$,
and $u_{2B_{i,M_i}}\leq \alpha/2$. Indeed, since $\rad(B_{i,m})\geq C a^m r_{i}$, we have by
\eqref{eq: d dim conv} that
\[
\frac{\mu(B_i)}{\mu(B_{i,m})}\leq C\bigg(\frac{r_i}{\rad(B_{i,m})}\bigg)^{Q_u} \leq C a^{-mQ_u},
\]
and thus, by H\"older's inequality,
\[
\vint{2B_{i,m}}u \leq C \mu(B_{i,m})^{-1/p}\|u\|_{L^p(X)} \xrightarrow{m\to\infty} 0,
\]
allowing us to choose the index $M_i$ as above.

By a standard chaining argument using
the $(1,p)$-Poincar\'e inequality (see e.g. \cite{HaK} or \cite{HEI}), we have that 
\begin{equation}\label{eq: chain}
 \alpha/2\leq |u_{2B_i}-u_{2B_{i,M_i}}| 
 \leq C \sum_{m=0}^{M_i} \rad(B_{i,m})\, \bigg(\;\vint{2\lambda B_{i,m}}g_u^p\,d\mu\bigg)^{1/p}.
\end{equation}
Comparing 
the sum on the right-hand side of \eqref{eq: chain} with the convergent 
geometric series $\sum_{m=0}^\infty a^{-m\delta}$,
we infer that if $\delta>0$, then there must exist a constant
$C_1>0$, independent of $u$ and $B_i$, and at least one index $m_i\in\N$
such that
\begin{equation}\label{eq: one big}
\rad(B_{i,m_i})\, \bigg(\;\vint{2\lambda B_{i,m_i}}g_u^p\,d\mu\bigg)^{1/p} \geq 
   C_1 a^{-m_i\delta} \geq C \left(\frac{r_i}{\rad(B_{i,m_i})}\right)^\delta.
\end{equation}
Let us now fix $q$ such that $\codima(X\setminus\Omega)>q>p$ and
set $\delta=(q-p)/p>0$. We thus obtain from 
\eqref{eq: one big} for each $B_i$ a ball $B_i^*=B_{i,m_i}$ 
with radius $r_i^*$ satisfying 
\begin{equation}\label{eq:bee star}
r_i^{q-p}
\leq C (r_i^*)^{q} \mu \big(B_{i}^*\big)^{-1} \int_{2\lambda B_{i}^*}g_u^p\,d\mu.%;
\end{equation}
Using estimate \eqref{eq:bee star}, 
and changing the summation to be over all Whitney balls, we calculate
\begin{equation}\label{eq:order}\begin{split}
\int_U \dom(x)^{-p}\,d\mu & \leq C \sum_{i} \mu(B_i)r_i^{-p}\\
& \leq C \sum_{i}\frac{\mu(B_i)r_i^{-q}}{\mu(B_i^*)(r_i^*)^{-q}} \int_{2\lambda B_{i}^*}g_u^p\,d\mu\\
& \leq C \sum_{B\in \W} \sum_{\{i:B=B_i^*\}}
 \frac{\mu(B_i)r_i^{-q}}{\mu(B)\rad(B)^{-q}} \int_{2\lambda B}g_u^p\,d\mu\\
& \leq C \sum_{B\in\W} \int_{2\lambda B}g_u^p\,d\mu  
        \sum_{\{i:B=B_i^*\}} \frac{\mu(B_i)r_i^{-q}}{\mu(B)\rad(B)^{-q}}. \\
\end{split}
\end{equation}

If $B=B_i^*$, that is, $B\in\W$ satisfies \eqref{eq:bee star} for the
ball $B_i$, then $B_i\sub LB$ by the chain condition.
Since $r_i^{-q}\approx \dom(x)$ for all 
$x\in B_i$, it follows from
the bounded overlap of the Whitney balls $B_i\sub LB$ 
and the assumption $\codima(X\setminus\Omega)>q$,
that
\begin{equation}\label{eq:aikawa helps}
\begin{split}
 \sum_{\{i:B=B_i^*\}} \frac{\mu(B_i)r_i^{-q}}{\mu(B)\rad(B)^{-q}}
 &\leq
  C \frac{1}{\mu(B) \rad(B)^{q}} \sum_{\{i:B=B_i^*\}} \int_{B_i} \dom(x)^{-q}\,d\mu\\
 &\leq
  C \frac{1}{\mu(B) \rad(B)^{q}}\int_{LB} \dom(x)^{-q}\,d\mu\\
 & \leq C \frac{1}{\mu(B) \rad(B)^{q}} \mu(LB)\rad(LB)^{-q}\leq C.
\end{split}
\end{equation} 

By the assumption $c\leq (6\lambda)^{-1}$ the overlap of the balls $2\lambda B$, where $B\in\W_c(\Omega)$, 
is uniformly bounded (Lemma \ref{lemma:no overlap}),
and so we conclude from \eqref{eq:order} and \eqref{eq:aikawa helps} that
\[
\int_U \dom(x)^{-p}\,d\mu \leq C \int_\Omega g_u^p\,d\mu.
\]
The claim \eqref{eq:assume hardy balls again} follows by taking the infimum over all capacity test functions for $U$ (and their upper gradients).
\end{proof}

 It has been shown in \cite[Section 6]{LT} (following the considerations of \cite{KZ}), that if $\Omega\sub X$ admits a $p$-Hardy inequality,
 then either $\codimh(X\setminus\Omega)<p-\delta$ or $\codima(X\setminus\Omega)>p+\delta$
 for some $\delta>0$ only depending on the data associated with the space $X$ and the Hardy inequality.  
 Moreover, there is also a local version of such a dichotomy for the dimension \cite[Theorem 6.2]{LT}. These
 results, together with the above Proposition~\ref{prop:john and aikawa} %Corollary \ref{coro:john and aikawa} 
 (and see also the following Section~\ref{sect:combo}), 
 show clearly that the condition $\codima(X\setminus\Omega)>p$ is very natural in the context of Hardy inequalities and
 thus also for quasiadditivity. On the other hand, the case $\codimh(X\setminus\Omega)<p-\delta$ includes
 open sets with uniformly $p$-fat complements; cf.\ Corollary~\ref{coro:q-add from p-fat}.

 The main open question here is whether the discrete John condition is really necessary in Proposition~\ref{prop:john and aikawa}; we know of no counterexamples. In the Euclidean space $\R^n$ this extra condition is certainly not needed. Indeed, as commented at the end of \cite{KZ}, the dimension bound $\dima(\R^n\setminus\Omega)<n-p$ implies by \cite[Theorem 2]{A} that 
\[
\int_E \dom(x)^{-p}\,dx \leq C R_{1,p}(E) \leq C \capp{p}(E,\Omega)
\]
for all (measurable) $E\sub\Omega$; here $R_{1,p}(E)$ is a Riesz capacity of $E$ (cf.\ \cite{A} or \cite{AE} for the definition) and the second inequality is a well-known fact. Quasiadditivity for $\capp{p}(\cdot,\Omega)$ follows by Theorem \ref{thm:all}.

Nevertheless, Proposition~\ref{prop:john and aikawa} 
still gives a partial answer to the question of Koskela and Zhong \cite[Remark 2.8]{KZ}, i.e., a
$q$-Hardy inequality holds in their setting provided that $\Omega$ satisfies the discrete John 
condition (and the Minkowski dimension in \cite[Remark 2.8]{KZ} is replaced by the 
correct Aikawa (co)dimension).

\section{Combining fat and small parts of the complement}\label{sect:combo}

The results studied in Section~\ref{sect:qadd} gave us a criterion, uniform $p$-fatness of $X\setminus\Omega$,
under which $\Om$ supports quasiadditivity of $p$-capacity for the Whitney decompositions of $\Om$; this condition requires $X\setminus\Omega$ to be `large'. Conversely, in Section~\ref{sect:q and a} we gave a criterion, largeness of the Aikawa co-dimension of $X\setminus\Om$ (or, smallness of the Assouad dimension --- and hence `smallness' of $X\setminus\Om$), under which $\Om$ supports quasiadditivity for the Whitney decompositions of $\Om$. Nevertheless, requiring the whole complement to be either large or small rules out many interesting cases. For instance, sometimes the complement of a domain can be decomposed into two closed subsets
such that one of them is `large' and one is `small';
an easy example is the punctured ball $B(0,1)\setminus\{0\}\subset\mathbb{R}^n$. The aim of this final section is to explain how the results of the previous Sections~\ref{sect:qadd} and~\ref{sect:q and a} can be combined to address such more complicated sets. In the Euclidean case, some results into this direction can be found also in \cite{LMM}.
A full geometric characterization of domains supporting quasiadditivity of $p$-capacity for Whitney decompositions still seems to be beyond our reach.  However, in the next lemma we demonstrate a technique which applies to a broad class of sets. 

\begin{lemma}\label{lem:thick n thin}
Assume that $X$ is unbounded and that $1<p<Q_u$. Suppose that $\Om_0\sub X$ is an open set such that $X\setminus\Om_0$ is uniformly $p$-fat. Suppose also that $F\subset\ol\Om_0$ is a closed set with $\codima(F)>p$, and that $X\setminus F$ satisfies the discrete John condition of Section \ref{sect:q and a}. Then $\Om=\Om_0\setminus F$ %is a domain (that is, it is open and connected),
satisfies a quasiadditivity property with respect to Whitney covers $\W_c(\Omega)$ with suitably small $c>0$.
\end{lemma}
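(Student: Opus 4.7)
The plan is to verify condition (b) of Theorem~\ref{thm:all} for $\Omega$, that is, the Mazya-type inequality
\[
\int_U \dom(x)^{-p}\,d\mu \leq C\capp{p}(U,\Omega)
\]
for every union $U=\bigcup_{i\in I} B_i$ of balls from $\W_c(\Omega)$. The lower bound in~\eqref{eq:est for cap}, which is required by Theorem~\ref{thm:all} to pass from (b) to quasiadditivity, is available by Lemma~\ref{lemma:cap low bound} since $X$ is unbounded and $1<p<Q_u$. Because $X\setminus\Omega=(X\setminus\Omega_0)\cup F$, we have $\dom(x)=\min\{\dist(x,X\setminus\Omega_0),\dist(x,F)\}$, and the natural strategy is to split the Whitney balls into those governed by the fat part and those governed by the small part. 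Concretely, I would set $I_1=\{i\in I:\dist(x_i,X\setminus\Omega_0)\leq\dist(x_i,F)\}$, $I_2=I\setminus I_1$, and $U_j=\bigcup_{i\in I_j}B_i$. A short triangle-inequality argument using $\rad(B_i)=c\dom(x_i)$ with $c\leq 1/2$ then gives the uniform pointwise comparabilities $\dom(x)\approx\dist(x,X\setminus\Omega_0)$ on $U_1$ and $\dom(x)\approx\dist(x,F)$ on $U_2$.

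For $U_1$, uniform $p$-fatness of $X\setminus\Omega_0$ implies that $\Omega_0$ admits a $p$-Hardy inequality (as used in the proof of Corollary~\ref{coro:q-add from p-fat}, via \cite{BMS,KLT}), so by Lemma~\ref{lemma:mazya and hardy} applied to $\Omega_0$,
\[
\int_{U_1}\dist(x,X\setminus\Omega_0)^{-p}\,d\mu \leq C\capp{p}(U_1,\Omega_0).
\]
Since $\Omega\subset\Omega_0$, every admissible test function for $\capp{p}(U_1,\Omega)$ automatically vanishes on $X\setminus\Omega_0$ and is therefore admissible for $\capp{p}(U_1,\Omega_0)$; hence $\capp{p}(U_1,\Omega_0)\leq\capp{p}(U_1,\Omega)\leq\capp{p}(U,\Omega)$. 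For $U_2$, the open set $X\setminus F$ satisfies the hypotheses of Proposition~\ref{prop:john and aikawa} by assumption, so it admits a $p$-Hardy inequality, and Lemma~\ref{lemma:mazya and hardy} gives
\[
\int_{U_2}\dist(x,F)^{-p}\,d\mu \leq C\capp{p}(U_2,X\setminus F) \leq C\capp{p}(U,\Omega),
\]
where the last step uses that test functions for $\capp{p}(U_2,\Omega)$ vanish on $F$ and so are admissible for $\capp{p}(U_2,X\setminus F)$. Summing the $U_1$ and $U_2$ contributions and invoking the pointwise comparabilities of $\dom$ on each piece completes the verification of Theorem~\ref{thm:all}(b) for $\Omega$.

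The main technical point is selecting the Whitney parameter $c$ uniformly small enough that Theorem~\ref{thm:all} and Proposition~\ref{prop:john and aikawa} both apply to the auxiliary sets $\Omega_0$ and $X\setminus F$, and that the two triangle-inequality comparabilities go through with uniform constants. A subtler issue is that Proposition~\ref{prop:john and aikawa} is stated under the standing assumption of Section~\ref{sect:q and a} that $X$ is $p$-hyperbolic; when $X$ happens to be $p$-parabolic one must either check that its chain argument still applies to $X\setminus F$ (the argument only uses a finite-energy test function together with the discrete John chains, both of which survive) or treat the parabolic case separately, noting that $\Cap_p(X\setminus\Om)\geq\Cap_p(X\setminus\Om_0)>0$ so the capacities under consideration are still nontrivial.
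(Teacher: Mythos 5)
Your proof is correct and follows essentially the same strategy as the paper: split the Whitney balls of $\Omega$ according to whether the nearest part of $X\setminus\Omega$ is the fat set $X\setminus\Omega_0$ or the thin set $F$, handle each piece via the results already established for $\Omega_0$ and for $X\setminus F$, and conclude with Theorem~\ref{thm:all}. The variation is in the bookkeeping: you verify condition (b) of Theorem~\ref{thm:all} directly, applying Lemma~\ref{lemma:mazya and hardy} to the auxiliary open sets $\Omega_0$ and $X\setminus F$ together with the pointwise comparabilities of the distance functions, whereas the paper verifies condition (d), extends $\W^1$ and $\W^2$ to genuine Whitney covers of $X\setminus F$ and $\Omega_0$, and then uses the two-sided capacity estimate~\eqref{eq:est for cap} to compare $\capp{p}(B_i,\Omega)$ with $\capp{p}(B_i,X\setminus F)$ and $\capp{p}(B_i,\Omega_0)$. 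Your route via (b) is slightly more economical since it sidesteps the cover-extension step. On your closing worry: it is in fact moot, because the hypotheses $X$ unbounded and $1<p<Q_u$ already imply that $X$ is $p$-hyperbolic. The chaining argument in the proof of Lemma~\ref{lemma:cap low bound} only uses that $u\in L^p(X)$ and $\mu(X)=\infty$, not that $u$ vanishes off $\Omega$, so it applies verbatim to a test function of $\capp{p}(B,X)$ and gives $\capp{p}(B,X)\geq c\,\rad(B)^{-p}\mu(B)>0$. Hence Proposition~\ref{prop:john and aikawa} applies to $X\setminus F$ directly, with no separate parabolic case needed.
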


\begin{proof}
Let $\W_c(\Omega)$ be a Whitney decomposition of $\Om$, where
$0<c<\min\{(C_A)^{-1},(30\lambda)^{-1}\}$. 
Set $\W^1$ to be the collection of all balls $B(x,r)\in \W$ satisfying
$\dist(x,X\setminus\Omega)=\dist(x,F)$ and, similarly, 
let $\W^2$ be the collection of all balls $B(x,r)\in \W$ satisfying
$\dist(x,X\setminus\Omega)=\dist(x,X\setminus\Omega_0)$.
It is clear that we can extend the collection $\W^1$ to a Whitney cover $\W^{1*}$ of $X\setminus F=:\Om_1$ and
the collection $\W^{2}$ to a Whitney cover $\W^{2*}$ of $\Om_0$, both with the same constant $c$ but possibly with larger overlap constants.

As before, to prove the quasiadditivity property, it suffices to consider unions of Whitney balls. Thus,
let $U=\bigcup_{i\in I} B_i$, where $B_i\in\W_c(\Omega)$ and $I\sub\N$; 
we may also assume that $\capp{p}(U,\Omega)<\infty$. Set 
$U_1=\bigcup_{B_i\in\W^{1}}B_i$, $U_2=\bigcup_{B_i\in\W^2}B_i$. 
Since
$\codima(X\setminus\Om_1)=\codima(F)>p$ and the discrete John condition holds for $\Omega_1$, we know, by Proposition~\ref{prop:john and aikawa}, that
\begin{equation}\label{eq:combo1}
   \sum_{\{i\in I:B_i\in\W^1\}}\capp{p}(B_i,\Om_1)\le C_1\capp{p}(U_1,\Om_1)\le C_1\capp{p}(U,\Om);
\end{equation}
here we use the facts $U_1\sub U$ and $\Om\subset\Om_1$.
On the other hand, an application of the results of Section~\ref{sect:qadd} yields
\begin{equation}\label{eq:combo2}
    \sum_{\{i\in I:B_i\in\W^2\}}\capp{p}(B_i,\Om_0)\le C_2\capp{p}(U_2,\Om_0)\le C_2\capp{p}(U,\Om).
\end{equation}
Here the last inequality follows since $U_2\sub U$ and $\Omega_0\setminus\Omega \subset F$ is of zero $p$-capacity. 
For the same reason we have in \eqref{eq:combo2} that $\capp{p}(B_i,\Om_0)=\capp{p}(B_i,\Om)$ for each $B_i\in \W^2$. To estimate the corresponding capacities on the left-hand side of \eqref{eq:combo1}, we use the capacity bounds from \eqref{eq:est for cap} (with respect to $\Omega$ and $\Omega_1$; note that the assumptions of Lemma \ref{lemma:cap low bound} are valid in the latter case) to obtain for all $B_i\in\W^1$ that
\[
\capp{p}(B_i,\Om)\leq C\rad(B_i)^{-p}\mu(B_i)\leq C_3 \capp{p}(B_i,\Om_1).
\]
In conclusion,
\[\begin{split}
 \sum_{i\in I}\capp{p}(B_i,\Omega) & \leq C_3 \bigg(\sum_{\{i\in I:B_i\in\W^1\}}\capp{p}(B_i,\Om_1) + \sum_{\{i\in I:B_i\in\W^2\}}\capp{p}(B_i,\Om_0)\bigg)\\ 
& \leq C_3(C_1+C_2)\capp{p}(U,\Om),
\end{split}\]
and the claim follows by Theorem \ref{thm:all}.
\end{proof}


\begin{thebibliography}{99}

\bibitem{Ad}
{\sc D.\ Adams},
`Quasi-additivity and sets of finite $L^{p}$-capacity',
{\em Pacific J.\ Math.\ }79 (1978), no. 2, 283--291. 

\bibitem{A}
{\sc H.\ Aikawa},
`Quasiadditivity of Riesz capacity',
{\em Math.\ Scand.\ }69 (1991), no. 1, 15--30. 

\bibitem{A2}
{\sc H.\ Aikawa},
`Quasiadditivity of capacity and minimal thinness',
{\em Ann.\ Acad.\ Sci.\ Fenn.\ Ser.\ A I Math.\ }18 (1993), no. 1, 65--75. 

\bibitem{AB}
{\sc H.\ Aikawa and A.\ A.\ Borichev},
`Quasiadditivity and measure property of capacity and the tangential boundary behavior of harmonic functions',  
{\em Trans.\ Amer.\ Math.\ Soc.\ }348  (1996),  no.\ 3, 1013--1030.

\bibitem{AE}
{\sc H.\ Aikawa and M.\ Ess\'en},
`Potential theory---selected topics',
Lecture Notes in Mathematics, 1633, Springer-Verlag, Berlin, 1996.

\bibitem{BB}
{\sc A.\ Bj\"orn, J.\ Bj\"orn},
`Nonlinear potential theory on metric spaces',
EMS Tracts in Mathematics, 17, 
European Mathematical Society (EMS), Z\"urich, 2011. xii+403 pp. 

\bibitem{BBS}
{\sc A.\ Bj\"orn, J.\ Bj\"orn, and N.\ Shanmugalingam},
`Sobolev extensions of H\"older continuous and characteristic functions on metric spaces',
{\em Canadian J.\ Math.\ }59 (2007), No.\ 6, 1135--1153.

\bibitem{B}
{\sc J.\ Bj\"orn},
`Boundary continuity for quasiminimizers on metric spaces',
{\em Illinois J.\ Math.\ }46 (2002), no.\ 2, 383--403  

\bibitem{BMS}
{\sc J.\ Bj\"orn, P.\ MacManus and N.\ Shanmugalingam}, 
`Fat sets and pointwise boundary estimates for $p$-harmonic functions in metric spaces',  
{\em J.\ Anal.\ Math.\ }85  (2001), 339--369.

%\bibitem{Cheeger}
%{\sc J.\ Cheeger}, 
%`Differentiability of Lipschitz functions on metric measure spaces'
%{\em Geom.\ Funct.\ Anal.\ }9 (1999), no.\ 3, 428--517.

\bibitem{Don}
{\sc W.F.~Donoghue Jr.},
`Remarks on potential theory', 
{\em Ark. Mat.} 4 (1962) 461--466. % (1962)

%\bibitem{FHK}
%{\sc B.\ Franchi, P.\ Haj\l asz, and P.\ Koskela}, 
%`Definitions of Sobolev classes on metric spaces',
%{\em Ann.\ Inst.\ Fourier (Grenoble) }49 (1999), no.\ 6, 1903--1924. 

%\bibitem{GM}
%{\sc N.\ Garofalo and N.\ Marola},
%`Sharp capacitary estimates for rings in metric spaces', 
%{\em Houst.\ J.\ Math.\ }36 (2010), no.\ 3, 681--695. 

\bibitem{HaK}
{\sc P.\ Haj\l asz and P.\ Koskela},
`Sobolev met Poincar\'e',
{\em Mem.\ Amer.\ Math.\ Soc.\ }145  (2000),  no.\ 688.

\bibitem{HEI}
{\sc J.\ Heinonen},
`Lectures on analysis in metric spaces',
Universitext, Springer-Verlag, New York, 2001.

\bibitem{HKST}
{\sc J.\ Heinonen, P.\ Koskela, N.\ Shanmugalingam and J.\ Tyson},
`Sobolev spaces on metric measure spaces: an approach based on upper gradients',
in preparation.

%\bibitem{HKM}
%{\sc J.\ Heinonen, T.\ Kilpel\"ainen and O.\ Martio},
%`Nonlinear potential theory of degenerate elliptic equations', 
%Oxford University Press, New York, 1993. 

\bibitem{Holo}
{\sc I.\ Holopainen},
`Nonlinear potential theory and quasiregular mappings on Riemannian manifolds',
{\em Ann. Acad. Sci. Fenn. Ser. A I Math. Dissertationes} 74 (1990), 45 pp.

\bibitem{HS}
{\sc I.\ Holopainen and N.\ Shamugalingam},
`Singular functions on metric measure spaces',
{\em Collect. Math.} 53 (2002), no.\ 3, 313--332. 

\bibitem{KaS}
{\sc S.\ Kallunki and N.\ Shanmugalingam}
`Modulus and continuous capacity',
{\em Ann. Acad. Sci. Fenn. Math.} 26 (2001) 455-464.

\bibitem{KiMa}
{\sc J.\ Kinnunen and O.\ Martio},
`Nonlinear potential theory on metric spaces',
{\em Illinois J.\ Math.} 46 (2002), no.\ 3, 857--883.

\bibitem{KS}
{\sc J.\ Kinnunen and N.\ Shanmugalingam}, 
`Regularity of quasi-minimizers on metric spaces',  
{\em Manuscripta Math.\ }105  (2001),  no.\ 3, 401--423.

\bibitem{KLT}
{\sc R.\ Korte, J.\ Lehrb\"ack and H.\ Tuominen},
`The equivalence between pointwise Hardy inequalities and uniform fatness', 
{\em Math.\ Ann.\ }351 (2011), no.\ 3, 711--731.

\bibitem{KoSh}
{\sc R.\ Korte and N.\ Shanmugalingam},
`Equivalence and self-improvement of $p$-fatness and Hardy's
inequality, and association with uniform perfectness',
{\em Math.\ Z.\ }264 (2010), no.\ 1, 99--110.

\bibitem{KZ}
{\sc P.\ Koskela and X.\ Zhong},
`Hardy's inequality and the boundary size', 
{\em Proc.\ Amer.\ Math.\ Soc.\ }131  (2003),  no.\ 4, 1151--1158.

\bibitem{land}
{\sc N.\ S.\ Landkof},
`Foundations of modern potential theory',
Springer-Verlag, New York--Heidelberg, 1972.


\bibitem{LMM} 
{\sc J.\ Lehrb\"ack}, 
`Weighted Hardy inequalities and the size of the boundary',
\emph{Manuscripta Math.\ }127 (2008), no.\ 2, 249--273.


\bibitem{lenb} 
{\sc J.\  Lehrb\"ack}, `Neighbourhood capacities',
{\em Ann.\ Acad.\ Sci.\ Fenn.\ Math.\ }37 (2012), no.\ 1, 35--51.

\bibitem{LT}
{\sc J.\ Lehrb\"ack and H.\ Tuominen},
`A note on the dimensions of Assouad and Aikawa', 
{\em J.\ Math.\ Soc.\ Japan}, to appear. 

%\bibitem{Lewis}
%{\sc J.\ Lewis},
%`Uniformly fat sets', 
%{\em Trans.\ Amer.\ Math.\ Soc.}  308 (1988), no.\ 1, 177--196. 

\bibitem{luukas}
{\sc J.\ Luukkainen},
`Assouad dimension: antifractal metrization, porous sets, and homogeneous measures',  
{\em J.\ Korean Math.\ Soc.\ }35  (1998),  no. 1, 23--76.

\bibitem{mazya}
{\sc V.\ G.\ Maz'ya},
`Sobolev Spaces',
Springer-Verlag, Berlin, 1985.

\bibitem{Sh}
{\sc N.\ Shanmugalingam},
`Newtonian spaces: an extension of Sobolev spaces to metric measure spaces',
{\em Rev.\ Mat.\ Iberoamericana} 16 (2000), no.\ 2, 243--279.

\bibitem{Sh2}
{\sc N.\ Shanmugalingam},
`Harmonic functions on metric spaces', 
{\em Illinois J.\ Math.} 45 (2001), no.\ 3, 1021--1050.

\bibitem{Sio}
{\sc M.\ Sion},
`On capacitability and measurability',
{\em Ann. Inst. Fourier (Grenoble)} 13 (1963) 83--98.

\bibitem{V}
{\sc J.\ V\"ais\"al\"a},
`Exhaustions of John domains',
{\em Ann.\ Acad.\ Sci.\ Fenn.\ Ser.\ A I Math.\ } 19  (1994),  no. 1, 47--57.

\bibitem{W2}
{\sc A.\ Wannebo},
`Hardy and Hardy PDO type inequalities in domains part I',
arXiv:math/0401253v1 (http://arxiv.org/abs/math/0401253), (2004).
\end{thebibliography}
\end{document}